\documentclass{amsart}

\usepackage{amsmath}
\usepackage{amssymb}
\usepackage{amsthm}

\newtheorem{thm}{Theorem}[section]
\newtheorem{prop}[thm]{Proposition}
\newtheorem{lem}[thm]{Lemma}

\theoremstyle{definition}
\newtheorem{dfn}[thm]{Definition}
\newtheorem{problem}[thm]{Problem}

\theoremstyle{remark}
\newtheorem{rem}[thm]{Remark}
\newtheorem*{acknowledgement}{Acknowledgements}

\newcommand{\R}{\mathbb{R}}

\newcommand{\Mag}{\mathrm{Mag}}
\newcommand{\1}{1\!\!1}

\newcommand{\tr}{\mathrm{tr}}

\title[Monotonicity of magnitude function]
{On the monotonicity of magnitude functions of 
negative type finite semimetric spaces}

\author{Kiyonori Gomi}

\address{
Department of Mathematics, 
Institute of Science Tokyo,
2-12-1 Ookayama, Meguro-ku, Tokyo, 152-8551, Japan.}

\email{kgomi@math.titech.ac.jp}

\author{Hiroshi Tsuji}

\address{Department of Mathematics, 
Institute of Science Tokyo,
2-12-1 Ookayama, Meguro-ku, Tokyo, 152-8551, Japan.}

\email{tsujihiroshi@math.sci.isct.ac.jp}

\subjclass[2010]{54E25, 51F99}

\keywords{magnitude, finite semimetric space, negative type}

\begin{document}

\begin{abstract}
Motivated by the conjectured monotonicity of the magnitude functions of metric spaces of negative type, we investigate the monotonicity problem for finite semimetric spaces, namely, ``metric spaces without the triangle inequality''. We completely resolve this problem by proving that the monotonicity holds for negative type semimetric spaces with at most three points, while constructing negative type semimetric spaces with at least four points whose magnitude functions are not monotonically increasing.
\end{abstract}

\maketitle

\tableofcontents


\section{Introduction}
\label{sec:introduction}

In an attempt to generalize the Euler characteristic of topological spaces and the cardinality of sets, Leinster introduced the notion of \textit{magnitude} to certain enriched categories \cite{L}. As a special case, the magnitude of a finite metric space is defined. The definition is simple: Let $d : X \times X \to [0, \infty)$ be a metric on a finite set $X$. The \textit{similarity matrix} or \textit{zeta matrix} of $X = (X, d)$ is defined by $Z_X = (e^{-d_{x, y}})_{x, y \in X}$. In the case where $Z_X$ is invertible, the magnitude $\Mag(X)$ is defined as the sum of the entries of the inverse matrix $Z_X^{-1}$.

The magnitude of a finite metric space, defined as above, is expected to behave like an ``effective number of points''. One such expectation is the validity of the inequalities $1 \le \Mag(X) \le \# X$ for $X \neq \emptyset$, where $\# X$ denotes the cardinality of a finite set $X$. Another expectation is that the so-called magnitude function $t \mapsto \Mag(tX)$ is monotonically increasing, where $tX$ denotes the set $X$ equipped with the metric $t d$ obtained by rescaling $d$ by $t > 0$. However, it has long been known that these expectations do not hold generally \cite{L}.

To ensure the expected behaviour, the classical assumption that a finite metric space $X$ is of \textit{negative type} is often made. It is known \cite{M,Sch1} that this assumption is equivalent to the positive definiteness of $Z_{tX}$ for all $t > 0$. Then, the positive definiteness of $Z_X$ implies the inequality $\Mag(X) \ge 1$ for $X \neq \emptyset$, as shown in the original paper \cite{L} in which magnitude was introduced. The other inequality $\Mag(X) \le \# X$ has been proved recently in \cite{AG,GM} under the assumption that $Z_{\frac{1}{2}X}$ is positive definite. Using the technique in \cite{GM}, we can also show that the determinant function $t \mapsto \det Z_{tX}$ is monotonically increasing when $X$ is negative type. (See Appendix \ref{sec:appendix:determinant} for details.) The monotonicity of the magnitude function associated to a negative type metric space had been conjectured since \cite{LM}, but counterexamples were constructed by Venkata Siddharth Pendyala only very recently in personal communication with the first author. The counterexamples require at least $14$ points, and it is not known whether they can be realized with fewer points. Thus, although the conjecture is false in general, the question of the smallest number of points for which monotonicity can fail remains open.

To date, many properties of the magnitude of negative type finite metric spaces have been proved \textit{without using the triangle inequality}. Consequently, many of these properties can be generalized to negative type \textit{semimetric} spaces, namely, ``metric spaces without the triangle inequality''. Generally, the magnitude can be defined for a matrix with a certain property \cite{L}. We apply this viewpoint to $Z_X$ to define the magnitude $\Mag(X)$ of a semimetric space $X$. In the semimetric case, the negative type assumption is again equivalent to the positive definiteness of $Z_{tX}$ for all $t > 0$. Hence $\Mag(X)$ is also defined as the sum of the entries of $Z_X^{-1}$. Then, in particular, the same proof as in the metric case leads to the inequalities $1 \le \Mag(X) \le \# X$ and the monotonicity of the function $t \mapsto \det Z_{tX}$ for a non-empty semimetric space $X$ of negative type.

In view of the observation above, it is natural to generalize the problem about the monotonicity of the magnitude function to semimetrics:

\begin{problem} \label{problem:main}
Is the magnitude function $t \to \Mag(tX)$ monotonically increasing in $t > 0$ for any finite semimetric space $X$ of negative type?
\end{problem}

Our main result completely resolves Problem \ref{problem:main}.

\begin{thm}
Let $X$ be a finite semimetric space of negative type.

\begin{itemize}
\item
In the case where $\# X \le 3$, the answer is ``Yes''.

\item
In the case where $\# X \ge 4$, the answer is ``No''.

\end{itemize}
\end{thm}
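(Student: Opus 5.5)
We split the proof into the two claims of the theorem.

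\emph{The case $\#X\le 3$.} For $\#X=1$ the magnitude is identically $1$. For $\#X=2$ with distance $d>0$ we have $\Mag(tX)=2/(1+e^{-td})$, which is increasing in $t$.

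For $\#X=3$, write $a_{ij}=e^{-t d_{ij}}\in(0,1)$ for the three off-diagonal entries. Set $x=a_{12}$, $y=a_{13}$, $z=a_{23}$. By the cofactor formula,
\begin{equation*}
\Mag(tX)=\frac{N}{D},\qquad N=3-2(x+y+z)-(x^2+y^2+z^2)+2(xy+yz+zx),\qquad D=1+2xyz-x^2-y^2-z^2,
\end{equation*}
where $D=\det Z_{tX}>0$. The derivative is
\begin{equation*}
\frac{d}{dt}\Mag(tX)=\sum_{(i,j)}\frac{\partial\Mag}{\partial a_{ij}}\cdot(-d_{ij}a_{ij}).
\end{equation*}
It therefore suffices to show that each partial derivative $\partial\Mag/\partial x$ (and likewise in $y$, $z$) is $\le 0$ on the region where the matrix is positive definite, which here means $D>0$.

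Since $Z^{-1}$ is symmetric, the general identity $\partial_x\Mag=-2w_1w_2$ holds, where $w=Z^{-1}\1$ is the weighting. So we need $w_1w_2\ge0$ whenever $D>0$, which is too strong in general. However, the weighting components are $w_i\propto$ cofactor sums. For example,
\begin{equation*}
w_1 D=(1-z^2)-(x-yz)-(y-xz)=(1-z)(1+z-x-y)+\dots.
\end{equation*}
The plan is to factor these explicitly and show that the relevant sign condition holds in the positive definite region. This reduces to $1+z\ge x+y$, which follows from positive definiteness together with $x,y,z<1$.

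If the coordinatewise sign fails, the fallback is the combined derivative. We use the negative-type structure of $X$: a three-point negative type semimetric is one whose square roots of distances embed in Euclidean space, equivalently $\sqrt{d}$ satisfies the triangle inequality. With the substitution $x=e^{-s\alpha^2}$ and similar, we would check positivity of $\sum d_{ij}a_{ij}w_iw_j$ directly. I expect this sign analysis, which must be valid along the whole ray $t>0$ and use negative type rather than mere $D>0$, to be the main obstacle.

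\emph{The case $\#X\ge4$.} We construct an explicit four-point negative type semimetric, for instance the points of a Euclidean configuration with distance $d=\|p-q\|^2$. Squared Euclidean distances are always of negative type, and squaring need not preserve the triangle inequality, which gives freedom. A natural candidate is a very flattened configuration, say three nearly collinear points plus one point, or a rhombus with one short diagonal. We then compute $\Mag(tX)$ symbolically and find $t$ with negative derivative, either by an exact rational evaluation at $e^{-t}$ or via asymptotics as $t\to0$ or at a chosen $t$.

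To pass to $n\ge4$ points, we add points far away (distance $L\to\infty$). Magnitude is then approximately additive: $\Mag(tX')\to\Mag(tX)+(n-4)$ locally uniformly with derivatives. Negative type is preserved by taking a Euclidean squared-distance embedding with distant extra points. Hence a strict decrease persists for large $L$.
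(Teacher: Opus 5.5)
Your $3$-point argument has a genuine gap, and it is exactly where the content of the theorem lies. You correctly get $w_1\det Z_{tX}=(1-z)(1+z-x-y)$. However, the claim that $1+z\ge x+y$ follows from positive definiteness and $x,y,z<1$ is false.

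Expanding in $t$ gives $1+z-x-y=t(d_{12}+d_{13}-d_{23})+O(t^2)$. So whenever the triangle inequality fails at vertex $1$, the weight $w_1$ is negative for small $t$, even though $Z_{tX}$ is positive definite by negative type. Concretely, the squared distances of $\{0,1,-1\}\subset\R$ ($d_{12}=d_{13}=1$, $d_{23}=4$) at $e^{-t}=0.6$ give $x+y=1.2>1.1296=1+z$. Then $w_2,w_3>0>w_1$, and two of your three partials $-2w_iw_j$ are positive. Your own remark that $w_iw_j\ge0$ is ``too strong in general'' already concedes this.

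So the coordinatewise route covers precisely the metric case, which is Leinster's positivity of weightings. For non-metric $d$ you only announce a fallback (positivity of $\sum d_{ij}a_{ij}w_iw_j$ using negative type) and leave it as ``the main obstacle''. The missing idea is how the single positive term compensates the negative ones. The paper normalises to $d_{01}=\alpha$, $d_{02}=\beta$, $d_{12}=\alpha+\beta+2$ with $0<\alpha\le\beta$. Then negative type becomes $\alpha\beta\ge1$, and only $w_0$ can be negative. It splits $(\alpha+\beta+2)q^{\alpha+\beta+2}w_1w_2$ into an $(\alpha+1)$-part and a $(\beta+1)$-part, paired with $\alpha q^\alpha w_0w_1$ and $\beta q^\beta w_0w_2$ respectively. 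This reduces the claim to $\phi_{q,\alpha}(q^\alpha,q^\beta)>0$ and $\phi_{q,\beta}(q^\beta,q^\alpha)>0$. Since $\phi_{q,\alpha}(q^\alpha,z)$ is decreasing in $z$, only the extremal cases $\beta=\alpha\ge1$ and $\alpha\beta=1$ remain, and these are settled by explicit $\sinh$/$\cosh$ identities. Nothing in your proposal replaces this step.

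For $\#X\ge4$ the statement is existential, so you must exhibit and verify a witness; ``a natural candidate \dots\ we then compute'' is a search plan, not a proof. If you look for the failure near $t=0$, note that for strictly negative type $\lim_{t\to0}d\Mag(tX)/dt>0$. The configuration must therefore be affinely dependent; your planar candidates are, but nothing is checked. The paper takes $v_0=(0,0)$, $v_1=(1,0)$, $v_2=(2,s)$, $v_3=(4,4s)$ with small $s\neq0$, where the small-scale limit of the derivative tends to $-5/2$ as $s\to0$.

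Your passage from $4$ to $n$ points by adding far-away Euclidean points is a valid alternative to the paper's $1$-sum with short copies of $X_2$. Take a fixed $t_0>0$ with $d\Mag(tX_4)/dt<0$ at $t=t_0$; it exists by continuity once the small-scale limit is negative. Then $Z_{t_0X'}$ and its $t$-derivative converge to those of $Z_{t_0X_4}\oplus I$, and squared Euclidean distances preserve negative type. The $1$-sum buys an exact identity $\Mag(tX_{4+k})=\Mag(tX_4)+k\Mag(tX_2)-k$ valid down to $t\to0$, whereas your limit argument works only at fixed positive scales. Both routes depend on first having the $4$-point example.
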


We next explain these two statements.

Let $X_n$ denote a semimetric space consisting of $n$ points. In the case where $n = 1, 2$, there is no distinction between a metric and a semimetric. Furthermore, $X_1$ and $X_2$ are of negative type. We readily get
\begin{align*}
\Mag(t X_1) &= 1, &
\Mag(t X_2) &= \frac{2}{1 + e^{-t \ell}},
\end{align*}
where $\ell > 0$ is the distance between the two points in $X_2$. Hence the answer to Problem \ref{problem:main} is clearly ``Yes''. (We are using the term ``monotonically increasing'' in the non-strict sense.) In the case where $n = 3$, any $3$-point metric space $X_3$ is of negative type \cite{L}. In this metric case, the monotonicity follows from the fact known as the positivity of the unique weighting on a $3$-point metric space \cite{L}. In contrast, when a negative type semimetric space $X_3$ is not a metric space, the negative type condition for $X_3$ becomes non-trivial. Since the magnitude function $\Mag(tX_3)$ can be explicitly described as a function of $t$, the monotonicity may appear to be easy to prove. However, Problem \ref{problem:main} turns out to be somewhat more difficult than it first appears. Our proof of the monotonicity in the $3$-point case, which will be Theorem \ref{thm:3_point} in \S\ref{sec:3_point_case}, is divided into two parts. In the first part, we use a special property of a $3$-point (semi)metric space to reduce the monotonicity to the positivity of two explicit functions. Then the second part proves the positivity of the functions. The proof is elementary, but is nevertheless non-trivial. 

In the case where $n \ge 4$, there exist $n$-point semimetric spaces $X_n$ of negative type which are not metric spaces and whose magnitude functions are not monotonically increasing. Such an example was first discovered by Shuhei Ohyama in the case where $n = 5$ in personal communication with the first author. A $4$-point example was subsequently found. Based on these examples, one can construct an example $X_n$ for each $n \ge 6$. The detailed construction is described in \S\ref{sec:4_point_and_more}. We note that, in these cases of $n \ge 4$, the failure of the monotonicity of $\Mag(t X_n)$ occurs near $t = 0$. This is shown by computing the small-scale limit of the derivative $\lim_{t \to 0}d\Mag(tX_n)/dt$ following the ideas in \cite{RY}.

\bigskip

The organization of this paper is as follows. In \S\ref{sec:preliminary}, we review some basic facts about semimetric spaces and their magnitude to be used in this paper. In \S\ref{sec:3_point_case}, the monotonicity in the $3$-point case (Theorem \ref{thm:3_point}) is proved. \S\ref{sec:4_point_and_more} is devoted to the constructions of negative type semimetric spaces consisting of $n \ge 4$ points whose magnitude functions are not monotonically increasing near $t = 0$. Finally, in Appendix \ref{sec:appendix:determinant}, we prove the monotonicity of the determinant function $t \mapsto \det Z_{tX}$ for any finite semimetric space $X$ of negative type.

\bigskip

\begin{acknowledgement}
KG thanks Shuhei Ohyama and Venkata Siddharth Pendyala for valuable communications.
\end{acknowledgement}

\paragraph*{{\it AI use statement.}}
All of the results in this note were obtained by the authors, while GPT-5.6 Sol was used to generate ideas for the proof of Theorem \ref{thm:3_point} in response to prompts from the second-named author. Both authors verified and revised the final argument.


\section{Preliminaries}
\label{sec:preliminary}

We review here some basic facts about semimetric spaces and their magnitude to be used in this paper.

\subsection{Semimetric spaces of negative type}

Let $X$ be a (finite) set. As is well-known, a map $d : X \times X \to [0, \infty)$ is said to be a \textit{metric} on $X$ when the following (i), (ii) and (iii) hold.

\begin{enumerate}
\item[(i)]
$d_{x, y} = 0$ if and only if $x = y \in X$. 

\item[(ii)]
$d_{x, y} = d_{y, x}$ for all $x, y \in X$.

\item[(iii)]
$d_{x, y} + d_{y, z} \ge d_{x, z}$ for all $x, y, z \in X$.

\end{enumerate}

We say $d$ is a \textit{semimetric} on $X$ when (i) and (ii) are satisfied. That is, a semimetric is a ``metric without the triangle inequality''. In any event, we call $d$ the distance function on a (semi)metric space $X$. We often omit $d$ and just write $X$ to mean a (semi)metric space $(X, d)$.

\medskip

We then recall the classical definition of being \textit{negative type}. We refer to \cite{DL} for more information. To state the definitions, let $M(n, \R)$ denote the space of real $n \times n$ matrices, $\langle \ , \ \rangle$ the standard inner product on $\R^n$ and $\1 \in \R^n$ the vector whose entries are $1$. We define a linear subspace $\Pi_0 \in \R^n$ by $\Pi_0 = \{ v \in \R^n |\ \langle v, \1 \rangle = 0 \}$. Given a semimetric $d$ on a finite set $X = \{ 0, 1, \ldots, n -1 \}$ consisting of $n > 1$ points, we define the \textit{distance matrix} $D \in M(n, \R)$ of $(X, d)$ by $D = (d_{i, j})_{i, j \in X}$.

\begin{dfn}
Let $d$ be a (semi)metric on a finite set $X$ consisting of $n \ge 1$ points. We say that $(X, d)$ (or $d$) is of \textit{negative type} when its distance matrix $D$ is conditionally negative semidefinite. That is, $\langle v, Dv \rangle \le 0$ for all $v \in \Pi_0$.
\end{dfn}

The definition of being negative type has a number of equivalent formulations. We here state one of them which makes use of \textit{excess matrices}. For a (semi)metric $d$ on a finite set $X = \{ 0, 1, \ldots, n - 1 \}$ consisting of $n > 1$ points and $k \in X$, we define the $k$th \textit{excess matrix} $E^{(k)} \in M(n - 1, \R)$ by $E^{(k)} = ( d_{i, k} + d_{j, k} - d_{i, j} )_{i, j \in X \backslash \{ k \}}$.

\begin{lem} \label{lem:neg_type_and_excess}
Let $(X, d)$ be a finite (semi)metric space consisting of $n > 1$ points.
\begin{itemize}
\item[(a)]
All the excess matrices $E^{(k)}$, ($k \in X$) have the same inertia.

\item[(b)]
$(X, d)$ is of negative type if and only if $E^{(k)}$ is positive semidefinite.

\end{itemize}
\end{lem}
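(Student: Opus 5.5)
We work with the change of basis relating the distance matrix to the excess matrix. Fix $k \in X$ and let $P$ be the $n \times (n-1)$ matrix whose columns are $e_i - e_k$ for $i \in X \setminus \{k\}$. These columns form a basis of $\Pi_0$, so $v \mapsto Pv$ is an isomorphism $\R^{n-1} \to \Pi_0$.

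First we compute $P^T D P$. Its $(i,j)$ entry is
\[
(e_i - e_k)^T D (e_j - e_k) = d_{i,j} - d_{i,k} - d_{k,j} + d_{k,k} = d_{i,j} - d_{i,k} - d_{j,k},
\]
using $d_{k,k}=0$ and symmetry. Hence $P^T D P = -E^{(k)}$. Consequently, for every $w \in \R^{n-1}$,
\[
\langle w, E^{(k)} w\rangle = -\langle Pw, D Pw\rangle .
\]
Since $w \mapsto Pw$ is onto $\Pi_0$, the matrix $E^{(k)}$ is congruent to the quadratic form $-D|_{\Pi_0}$ written in the basis $\{e_i - e_k\}$.

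For (a), take two indices $k, k'$. The bases $\{e_i-e_k\}_{i\ne k}$ and $\{e_i-e_{k'}\}_{i\ne k'}$ of $\Pi_0$ are related by an invertible $(n-1)\times(n-1)$ matrix $Q$, so $P' = PQ$. Then
\[
E^{(k')} = -P'^T D P' = Q^T E^{(k)} Q .
\]
Thus $E^{(k)}$ and $E^{(k')}$ are congruent, and Sylvester's law of inertia shows they have the same inertia.

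For (b), the displayed identity shows that $\langle v, Dv\rangle \le 0$ for all $v\in\Pi_0$ holds exactly when $\langle w, E^{(k)}w\rangle \ge 0$ for all $w$, that is, exactly when $E^{(k)}$ is positive semidefinite. By (a) this holds for one $k$ if and only if it holds for every $k$.

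The only step needing care is the verification that the columns of $P$ span $\Pi_0$. They are $n-1$ linearly independent vectors in a subspace of dimension $n-1$, which settles it. The rest is direct computation.
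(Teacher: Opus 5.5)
Your proof is correct. Part (b) is essentially the paper's argument. For $k=0$ your vector $Pw$ is the paper's vector $v$ with entries $-\langle u,\1\rangle$ and $u$, and your identity $\langle w,E^{(k)}w\rangle=-\langle Pw,DPw\rangle$ is its displayed formula.

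For part (a) you take a different route. The paper borders $D$ by $\1$ to form the $(n+1)\times(n+1)$ Cayley--Menger type matrix $\mathrm{CM}$. Using the unipotent congruence $P^{(0)}$, it shows that $\mathrm{CM}$ is congruent to the direct sum of a $2\times 2$ hyperbolic block (inertia $(1,1,0)$) and $-E^{(0)}$. So the inertia of each $E^{(k)}$ is determined by that of the single matrix $\mathrm{CM}$, which depends on $k$ only up to a permutation of indices.

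You instead recognize every $E^{(k)}$ as the matrix of one quadratic form, $-D$ restricted to $\Pi_0$, in the basis $\{e_i-e_k\}_{i\neq k}$. Hence $E^{(k')}={}^tQ\,E^{(k)}Q$ for an explicit change-of-basis matrix $Q$.

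Your route is more economical. A single identity gives both (a) and (b), the congruence is directly between excess matrices, and no permutation step is needed (the paper passes over that step with ``in the same way'').

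The paper's route gives a by-product yours does not. It yields the inertia of $\mathrm{CM}$ as $(n_-+1,n_++1,n_0)$. Since $\det P^{(0)}=1$, it also gives the classical Cayley--Menger relation $\det\mathrm{CM}=(-1)^n\det E^{(0)}$. This extra information is not used elsewhere in the paper.
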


\begin{proof}
(a)
For a semimetric $d$ on $X = \{ 0, 1, \ldots, n - 1 \}$, one has the following expressions of the distance matrix and the $0$th excess matrix
\begin{align*}
D &=
\left(
\begin{array}{cc}
0 & {}^td^{(0)} \\
d^{(0)} & D^{(0)}
\end{array}
\right), 
&
E^{(0)}
&=
\1 {}^t d^{(0)} + d^{(0)} {}^t \1 - D^{(0)},
\end{align*}
where ${}^t$ means the transpose. Let us introduce a matrix $\mathrm{CM}$ of Cayley-Menger type and an invertible matrix $P^{(0)}$, both belonging to $M(n+1, \R)$, as follows:
\begin{align*}
\mathrm{CM} 
&=
\left(
\begin{array}{cc}
0 & {}^t\1 \\
\1 & D
\end{array}
\right)
=
\left(
\begin{array}{ccc}
0 & 1 & {}^t \1 \\
1 & 0 & {}^t d^{(0)} \\
\1 & d^{(0)} & D^{(0)} 
\end{array}
\right),
&
P^{(0)}
&=
\left(
\begin{array}{ccc}
1 & 0 & - {}^td^{(0)} \\
0 & 1 & - {}^t\1 \\
0 & 0 & I
\end{array}
\right),
\end{align*}
where $I \in M(n-1, \R)$ is the identity matrix. We have
$$
{}^tP^{(0)} (\mathrm{CM}) P^{(0)}
=
\left(
\begin{array}{ccc}
0 & 1 & 0 \\
1 & 0 & 0 \\
0 & 0 & -E^{(0)}
\end{array}
\right).
$$
Thus, if $(n_+, n_-, n_0)$ and $(N_+, N_-, N_0)$ respectively denote the inertia of $E^{(0)}$ and $\mathrm{CM}$, then they are related by $(N_+, N_-, N_0) = (n_- + 1, n_+ + 1, n_0)$. In the same way, the inertia of the other excess matrices $E^{(k)}$ are related to that of $\mathrm{CM}$. This proves that the inertia of $E^{(k)}$ is independent of $k \in X$.

(b)
A vector $v \in \Pi_0 \subset \R^n$ admits a unique expression
$$
v = 
\left(
\begin{array}{c}
- \langle u, \1 \rangle \\
u
\end{array}
\right)
$$
in terms of $u \in \R^{n-1}$. Now the formula
$$
- \langle v, D v \rangle
=
2 \langle u, \1 \rangle \langle u, d^{(0)} \rangle
- \langle u, D^{(0)} u \rangle
=
\langle u, E^{(0)} u \rangle
$$
implies the equivalence. 
\end{proof}

For later use, we describe the so-called ``Schoenberg embedding'' or ``quadratic embedding'' of a finite (semi)metric space of negative type.

\begin{prop}[\cite{Sch2}] \label{prop:Schoenberg_embedding}
Let $d$ be a semimetric on a finite set $X = \{ 0, 1, \ldots, n - 1 \}$ consisting of $n > 1$ points. Suppose that $(X, d)$ is negative type and the inertia of the $0$th excess matrix $E^{(0)} \in M(n-1, \R)$ is $(p, 0, n - 1 - p)$, namely $E^{(0)}$ has $p$ positive eigenvalues counted with multiplicity. Then, $p \ge 1$, and there are vectors $v_1, \ldots, v_{n-1} \in \R^p$ with the following properties.
\begin{itemize}
\item[(a)]
$\{ v_1, \ldots, v_p \}$ is a basis of $\R^p$.

\item[(b)]
If we put $v_0 = 0$, then $d_{i, j} = \lVert v_i - v_j \rVert^2$ for any $i, j \in X$.

\item[(c)]
$E^{(0)}/2$ agrees with the Gram matrix of the vectors $v_1, \ldots, v_{n-1}$
$$
\frac{1}{2}E^{(0)} 
= \mathrm{Gram}(v_1, \ldots, v_{n-1}) 
= ( \langle v_i, v_j \rangle )_{i, j = 1}^{n-1}.
$$

\end{itemize}
\end{prop}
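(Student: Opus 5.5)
We prove Proposition~\ref{prop:Schoenberg_embedding} by diagonalizing the excess matrix $E^{(0)}$ and reading off the vectors $v_i$ as the columns of a square root of $\frac{1}{2}E^{(0)}$. By Lemma~\ref{lem:neg_type_and_excess}(b), the negative type assumption makes $E^{(0)}$ positive semidefinite. Its inertia is therefore $(p, 0, n-1-p)$, and $\frac{1}{2}E^{(0)}$ is a positive semidefinite symmetric matrix of rank $p$.

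\textbf{Step 1: $p \ge 1$.} The diagonal entries of $E^{(0)}$ are $d_{i,0} + d_{i,0} - d_{i,i} = 2 d_{i,0}$. By axiom (i) of a semimetric, $d_{i,0} > 0$ for $i \neq 0$, and $n > 1$ guarantees such an $i$ exists. So $\tr E^{(0)} > 0$, hence $E^{(0)} \neq 0$ and $p \ge 1$.

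\textbf{Step 2: a rank-$p$ factorization.} Write $\frac{1}{2}E^{(0)} = {}^t W W$ with $W \in M(p \times (n-1), \R)$ of rank $p$. One way is to diagonalize $\frac{1}{2}E^{(0)} = {}^t Q \Lambda Q$ with $Q$ orthogonal, keep the $p$ positive eigenvalues, and take $W$ to be the rows $\sqrt{\lambda_k}\, q_k$. Let $w_1, \ldots, w_{n-1} \in \R^p$ be the columns of $W$; then $\langle w_i, w_j \rangle = \frac{1}{2}E^{(0)}_{i,j}$. Because $W$ has rank $p$, some $p$ of the $w_i$ form a basis of $\R^p$. The statement labels these as $v_1, \ldots, v_p$, so I have to confirm this labeling is allowed. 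A subtle point is that (a) asks for $v_1, \ldots, v_p$ specifically. For this I reorder the points $1, \ldots, n-1$ so that the basis vectors come first; this is a relabeling of $X \setminus \{0\}$ that fixes $0$, and I read the statement as permitting it ("up to relabeling"). Setting $v_i = w_i$ in the new order then gives (a) and (c).

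\textbf{Step 3: property (b).} Put $v_0 = 0$. For $i \ge 1$,
\[
\lVert v_i - v_0 \rVert^2 = \langle v_i, v_i \rangle = \tfrac{1}{2}E^{(0)}_{i,i} = d_{i,0}.
\]
For $i, j \ge 1$,
\[
\lVert v_i - v_j \rVert^2 = \langle v_i, v_i \rangle + \langle v_j, v_j \rangle - 2\langle v_i, v_j \rangle = d_{i,0} + d_{j,0} - (d_{i,0} + d_{j,0} - d_{i,j}) = d_{i,j}.
\]
Symmetry of $d$ (axiom (ii)) is what makes $E^{(0)}$ symmetric, so the spectral theorem in Step 2 applies.

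Nothing here is a genuine obstacle. The only delicate points are the labeling issue in (a), handled by the reordering in Step 2, and the observation in Step 1 that the diagonal entries $2d_{i,0}$ are strictly positive, which is what forces $p \ge 1$.
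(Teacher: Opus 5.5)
Your argument is correct and is the standard Gram-factorization proof of this result; the paper gives no proof of its own and only cites Schoenberg, so there is no different route to compare against.

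Your relabeling caveat for (a) is genuinely necessary. Since (c) fixes the Gram matrix, (a) holds in the given labeling exactly when the leading $p \times p$ principal minor of $E^{(0)}$ is nonzero. This fails, for instance, for the $4$-point negative type semimetric coming from $v_0 = (0,0)$, $v_1 = (1,0)$, $v_2 = (2,0)$, $v_3 = (0,1)$: here $p = 2$, but every realization satisfies $v_2 = 2v_1$.
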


Note that the description above is not at all an isometric embedding of $(X, d)$ into $\R^p$ with the standard $L^2$-metric. Note also that (b) and (c) above are equivalent properties.

\begin{lem} \label{lem:metric_condition}
In the description of a semimetric $(X, d)$ in Proposition \ref{prop:Schoenberg_embedding}, the triangle inequality $d_{i, j} + d_{j, k} \ge d_{i, k}$ is equivalent to $\langle v_i - v_j, v_k - v_j \rangle \ge 0$. In other words, $(X, d)$ gives rise to a metric space if and only if $\{ v_0, \ldots, v_{n-1} \} \subset \R^p$ is a \textit{non-obtuse set}.
\end{lem}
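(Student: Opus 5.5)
Since $v_0 = 0$, property (b) of Proposition \ref{prop:Schoenberg_embedding} gives $d_{i,j} = \lVert v_i - v_j \rVert^2$ for all $i, j \in X$, including the cases where one index is $0$. We expand the triangle inequality directly in these terms. Fix $i, j, k \in X$. Then
\begin{align*}
d_{i, j} + d_{j, k} - d_{i, k}
&= \lVert v_i - v_j \rVert^2 + \lVert v_k - v_j \rVert^2 - \lVert (v_i - v_j) - (v_k - v_j) \rVert^2 \\
&= 2 \langle v_i - v_j, v_k - v_j \rangle,
\end{align*}
by the polarization identity $\lVert a \rVert^2 + \lVert b \rVert^2 - \lVert a - b \rVert^2 = 2\langle a, b\rangle$ with $a = v_i - v_j$ and $b = v_k - v_j$. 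Hence $d_{i,j} + d_{j,k} \ge d_{i,k}$ holds if and only if $\langle v_i - v_j, v_k - v_j \rangle \ge 0$. This is the first claim.

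For the second claim, recall that a semimetric satisfies axioms (i) and (ii), so $(X,d)$ is a metric space exactly when (iii) holds for all triples $i, j, k \in X$. By the first claim, this amounts to requiring that the angle at $v_j$ in every triple $v_i, v_j, v_k$ is non-obtuse, which is precisely the definition of a non-obtuse set. Triples with repeated indices need no separate treatment: if $i = j$ or $k = j$ the inner product vanishes, and if $i = k$ it equals $\lVert v_i - v_j \rVert^2 \ge 0$. Both conditions are therefore trivially satisfied in these cases.

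There is no real obstacle here. The only point needing care is that the point $v_0 = 0$ corresponding to the base point $0 \in X$ must be included in the set, so that triangles involving the index $0$ are covered. This is handled by the convention $v_0 = 0$ in property (b).
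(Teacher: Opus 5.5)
Your proposal is correct and follows the same approach as the paper, which simply observes the identity $d_{i,j} + d_{j,k} - d_{i,k} = 2\langle v_i - v_j, v_k - v_j \rangle$. You additionally spell out the polarization step and the degenerate triples, which the paper leaves implicit.
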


\begin{proof}
We can directly see $d_{i, j} + d_{j, k} - d_{i, k} = 2 \langle v_i - v_j, v_k - v_j \rangle$.
\end{proof}

\begin{rem} \label{rem:non_obtuse}
A classical result of Danzer and G\"{r}unbaum \cite{DG} tells that a non-obtuse set in $\R^d$ contains at most $2^d$ points. It follows that the possible inertia of an excess matrix of an $n$-point metric space of negative type is $(p, 0, n - 1 - p)$, where $p$ runs from $p_-$ to $n - 1$, and $p_-$ is given by $p_- = \min \{ p |\ n \le 2^p \} = \lceil \log_2 n \rceil$.
\end{rem}

\subsection{Magnitude of semimetric spaces}

Here we introduce the magnitude of a semimetric space as a straightforward generalization of that of a metric space \cite{L}.

\begin{dfn}
Let $X = (X, d)$ be a semimetric space consisting of $n \ge 1$ points.

\begin{enumerate}
\item[(a)]
The \textit{similarity matrix} (or \textit{zeta matrix}) of $X$ is defined as the matrix $Z_X = ( e^{-d_{i, j}} )_{i, j \in X} \in M(n, \R)$ whose $(i, j)$-entry is $\exp(-d_{i, j})$.

\item[(b)]
A \textit{weighting} on $X$ is a vector $w = (w_i)_{i \in X} \in \R^n$ such that $Z_X w = \1$.

\item[(c)]
When $X$ admits a weighting $w$, the \textit{magnitude} of $X$ is defined as the sum of the entries of $w$, which we describe as
$$
\Mag(X) = \langle \1, w \rangle.
$$

\item[(d)]
The \textit{magnitude function} is a function $t \mapsto \Mag(tX)$ defined for positive real numbers $t$ such that the magnitude of $tX = (X, td)$ is defined, where $t d$ is the rescaled semimetric $(t d)_{i, j} = t \cdot d_{i, j}$.

\end{enumerate}
\end{dfn}

It is easy to see that the magnitude above is independent of the choice of a weighting. If $Z$ is invertible, then the weighting is unique and is given by $w = Z^{-1} \1$, so that the magnitude is the sum of the entries of $Z^{-1}$, which we describe as
$$
\Mag(X) = \langle \1, Z_X^{-1} \1 \rangle.
$$

For our purpose, an important fact is the following equivalence for the negative type assumption.

\begin{lem}
A finite semimetric space $X$ is of negative type if and only if $Z_{tX}$ is positive definite for all $t > 0$.
\end{lem}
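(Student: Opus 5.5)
The plan is to go through Schoenberg's classical characterization, which states that conditional negative semidefiniteness of $D$ is equivalent to positive semidefiniteness of the kernels $e^{-tD}$ for all $t>0$. Strict positive definiteness will then follow from the fact that the points are distinct.

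For the direction ``negative type implies positive definite'', I will use Proposition \ref{prop:Schoenberg_embedding}. Since $X$ is of negative type, there are vectors $v_0=0, v_1,\ldots,v_{n-1}\in\R^p$ with $d_{i,j}=\lVert v_i-v_j\rVert^2$. Because $d$ is a semimetric, property (i) forces the $v_i$ to be pairwise distinct. Then
\[
e^{-t d_{i,j}} = e^{-t\lVert v_i-v_j\rVert^2},
\]
so $Z_{tX}$ is the Gaussian kernel matrix evaluated at distinct points of $\R^p$. The Gaussian is the Fourier transform of a strictly positive integrable function, namely a Gaussian density $\hat g_t$ up to constants. Hence for $c\in\R^n\setminus\{0\}$,
\[
\langle c, Z_{tX}c\rangle=\int_{\R^p}\Bigl|\sum_i c_i e^{i\langle \xi, v_i\rangle}\Bigr|^2 \hat g_t(\xi)\,d\xi .
\]
The integrand is nonnegative, and it is strictly positive on an open set, because the exponentials $e^{i\langle\xi,v_i\rangle}$ with distinct frequencies $v_i$ are linearly independent functions of $\xi$. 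So the form is positive. The case $n=1$ is trivial.

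For the converse, suppose $Z_{tX}$ is positive definite, or even only positive semidefinite, for every $t>0$. Take $v\in\Pi_0$. Then
\[
0\le \langle v, Z_{tX}v\rangle=\sum_{i,j}v_iv_j e^{-td_{i,j}} .
\]
Expanding $e^{-td_{i,j}}=1-td_{i,j}+O(t^2)$ and using $\langle v,\1\rangle=0$ to kill the constant term, this gives
\[
0\le -t\langle v,Dv\rangle+O(t^2).
\]
Dividing by $t$ and letting $t\to0^+$ yields $\langle v,Dv\rangle\le0$, which is exactly the negative type condition.

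The main obstacle is strictness in the first direction. Positive semidefiniteness follows directly from the Fourier representation, or from Schur products of $e^{t\langle v_i,v_j\rangle}$-type kernels. Strict definiteness, however, needs both the distinctness of the points and the linear independence of characters. I would handle this by the argument above: a trigonometric polynomial with distinct frequencies that vanishes on an open set, or almost everywhere, must have all coefficients zero.
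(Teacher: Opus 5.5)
Your proposal is correct and follows the paper's route. For the forward direction, the paper cites Meckes' argument: the Schoenberg embedding $d_{i,j}=\lVert v_i-v_j\rVert^2$ at distinct points, combined with the strict positive definiteness of the Gaussian. You write that argument out explicitly via the Fourier representation and the linear independence of characters. For the converse, you use the same first-order expansion of $\langle v, Z_{tX}v\rangle$ for $v\in\Pi_0$ that the paper attributes to Schoenberg.
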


\begin{proof}
In the case where $X$ is a metric space, Meckes \cite{M} proved that the negative type assumption on $X$ implies the positive definiteness of t$Z_{tX}$ for $t > 0$ by using theory of ``positive definite functions''  and the ``Schoenberg embedding'' in Proposition \ref{prop:Schoenberg_embedding}. Since this proposition is valid for semimetrics, so is Meckes' argument. The converse implication follows from Schoenberg's argument in \cite{Sch1} to compute the series expansion in $t > 0$
$$
\langle v, Z_{tX} v \rangle
=
- t \langle v, D v \rangle 
+ \frac{t^2}{2} \langle v, D^{\circ 2}v \rangle + \cdots,
$$
where $v \in \Pi_0$ and $D^{\circ 2} = (d_{i, j}^2)_{i, j}$.
\end{proof}

Note that a metric space $X$ such that $Z_{tX}$ is positive definite for all $t > 0$ is called \textit{stably positive definite} \cite{L,M}.

\bigskip

For later use, we give a formula of the derivative of the magnitude function.

\begin{lem} \label{lem:derivative_general}
Let $d$ be a semimetric on a finite set $X = \{ 0, 1, \ldots, n  - 1\}$ consisting of $n \ge 1$ points. Suppose that $Z_{tX}$ is invertible at $t > 0$. Then the derivative of the magnitude function $\Mag(tX)$ at $t$ is
$$
\frac{d\Mag(tX)}{dt}
=
\langle Z_{tX}^{-1}\1, (D_X \circ Z_{tX}) Z_{tX}^{-1}\1 \rangle
=
\langle w(t), (D_X \circ Z_{tX}) w(t) \rangle,
$$
where $\circ$ denotes the Hadamard (entrywise) product of matrices, and $w(t) = Z_{tX}^{-1} \1$ is the weighting of $tX$. If we express the entries of $w(t)$ as $w_i(t)$ and put $q = e^{-t}$, then we also have
$$
\frac{d\Mag(tX)}{dt}
=
2 \sum_{i < j} d_{i, j} q^{d_{i, j}} w_i w_j.
$$
\end{lem}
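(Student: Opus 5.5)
Write $Z = Z_{tX}$, $D = D_X$, and $\dot Z = dZ_{tX}/dt$; note that $\dot Z = -D\circ Z$, because each entry satisfies $\frac{d}{dt} e^{-t d_{i,j}} = -d_{i,j}e^{-t d_{i,j}}$. The plan is to differentiate the identity $\Mag(tX) = \langle \1, Z^{-1}\1\rangle$, which holds because $Z$ is invertible at $t$.

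First, invertibility is an open condition and the entries of $Z$ are smooth in $t$, so $Z$ remains invertible near $t$ and $t\mapsto Z^{-1}$ is smooth there. Differentiating $Z Z^{-1} = I$ gives the standard formula
$$
\frac{d}{dt}Z^{-1} = -Z^{-1}\dot Z Z^{-1} = Z^{-1}(D\circ Z)Z^{-1}.
$$

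Second, pairing this with $\1$ on both sides gives
$$
\frac{d\Mag(tX)}{dt} = \langle \1, Z^{-1}(D\circ Z)Z^{-1}\1\rangle = \langle Z^{-1}\1, (D\circ Z)Z^{-1}\1\rangle .
$$
The second equality uses that $Z$ is symmetric, since $d$ is symmetric by axiom (ii), so $Z^{-1}$ is symmetric too. Substituting $w(t) = Z^{-1}\1$ yields the first displayed formula of the lemma.

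Third, for the explicit form, expand the quadratic form as
$$
\langle w, (D\circ Z)w\rangle = \sum_{i,j} d_{i,j}\, e^{-t d_{i,j}}\, w_i w_j .
$$
The diagonal terms vanish because $d_{i,i}=0$. The off-diagonal terms pair up by the symmetry of $d$, giving $2\sum_{i<j} d_{i,j}\, q^{d_{i,j}}\, w_iw_j$ with $q=e^{-t}$, since $e^{-t d_{i,j}} = q^{d_{i,j}}$.

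There is no real obstacle here. The only points needing care are the local smoothness of the inverse, which justifies differentiating $Z^{-1}$, and the sign, which comes out positive because $\dot Z = -D\circ Z$.
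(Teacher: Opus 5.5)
Your proposal is correct and follows essentially the same route as the paper. You differentiate $Z_{tX}Z_{tX}^{-1}=I$, substitute $dZ_{tX}/dt=-D_X\circ Z_{tX}$, and expand the quadratic form. Along the way you make explicit two details the paper leaves implicit: the local smoothness of the inverse, and the use of the symmetry of $Z_{tX}^{-1}$.
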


\begin{proof}
Taking the derivative of the identity $Z_{tX} Z_{tX}^{-1} = I$, we have
$$
dZ_{tX}^{-1}/dt = -Z_{tX}^{-1} (dZ_{tX}/dt) Z_{tX}^{-1}.
$$
Now the first formula follows from $dZ_{tX}/dt = - D_X \circ Z_{tX}$. Then an explicit computation proves the second formula.
\end{proof}

\section{The case of $3$ points}
\label{sec:3_point_case}

The goal of this section is to prove the monotonicity in the $3$-point case.

\begin{thm} \label{thm:3_point}
Let $X = (X, d)$ be a $3$-point semimetric space of negative type. Then we have $d\Mag(tX)/dt > 0$ for all $t > 0$, and hence the magnitude function is strictly monotonically increasing for all $t > 0$.
\end{thm}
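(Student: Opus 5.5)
We will use the derivative formula of Lemma \ref{lem:derivative_general}:
$$
\frac{d\Mag(tX)}{dt} = 2\sum_{i<j} d_{i,j}\, q^{d_{i,j}} w_i w_j .
$$
For three points write $a=d_{1,2}$, $b=d_{0,2}$, $c=d_{0,1}$ and set $x=q^{c}$, $y=q^{b}$, $z=q^{a}$ with $q=e^{-t}$. Thus $x,y,z\in(0,1)$ are the off-diagonal entries of $Z_{tX}$. The weighting is explicit via the adjugate of the $3\times 3$ matrix:
$$
w_0 \propto (1-z)(1+z-x-y),
$$
and cyclically. The common positive factor is $1/\det Z_{tX}$, and $\det Z_{tX}>0$ by positive definiteness. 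Writing $\alpha=1-x$, $\beta=1-y$, $\gamma=1-z$, the numerators are
$$
N_0=\gamma(\alpha+\beta-\gamma),\qquad N_1=\beta(\alpha+\gamma-\beta),\qquad N_2=\alpha(\beta+\gamma-\alpha).
$$

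\textbf{Step 1 (the special 3-point property).} If all three $N_i$ are positive, every term of the sum is positive and we are done. This is the metric-case argument: the triangle inequality for $t d$ transfers to positivity of $N_i$. Negative type does not force this in the semimetric case, so we need to control the sign pattern. From $\alpha,\beta,\gamma>0$, at most one of the brackets $\alpha+\beta-\gamma$, $\alpha+\gamma-\beta$, $\beta+\gamma-\alpha$ can be $\le 0$, since the sum of any two brackets is positive. So at most one weight is nonpositive. Relabel so that $w_0\le 0<w_1,w_2$. This happens when $c$ is the largest distance and $\gamma\ge\alpha+\beta$. The terms involving $w_0$ are then negative, and we need
$$
a\,z\,N_1N_2 > |N_0|\bigl(c\,x\,N_1 + b\,y\,N_2\bigr),
$$
up to the common factor. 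Here the pair $(1,2)$ corresponds to the entry $z=q^a$; I will double-check the labelling in the write-up.

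\textbf{Step 2 (reducing to explicit functions).} Next we encode negative type. By Lemma \ref{lem:neg_type_and_excess}, with three points this is positive semidefiniteness of the $2\times2$ excess matrix $E^{(0)}$. Equivalently, $\sqrt a,\sqrt b,\sqrt c$ satisfy the triangle inequality, which is Proposition \ref{prop:Schoenberg_embedding} with $d=\lVert v_i-v_j\rVert^2$. Parametrize $\sqrt c\le\sqrt a+\sqrt b$. By monotonicity in $c$ (the bad case worsens as $c$ grows), it should suffice to treat the extremal case $\sqrt c=\sqrt a+\sqrt b$; this needs checking. After scaling, set $a=s^2t$, $b=u^2t$, $c=(s+u)^2t$. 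The inequality becomes the positivity of an explicit function of two variables built from $e^{-s^2}$, $e^{-u^2}$ and $e^{-(s+u)^2}$. Splitting the target into the contribution from the term $c\,x\,N_1$ and the term $b\,y\,N_2$, and using the symmetry $s\leftrightarrow u$, I expect to reduce to the positivity of two one-sided functions. These are the ``two explicit functions'' the introduction mentions.

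\textbf{Step 3 (main obstacle).} The hard part is proving positivity of these two transcendental functions on the whole quadrant. My first attempt will use the convexity of $e^{-r}$, through inequalities such as $1-e^{-(s+u)^2}\le$ combinations of $1-e^{-s^2}$ and $1-e^{-u^2}$, together with $r e^{-r}\le 1-e^{-r}$. The aim is to compare each bracket with linear expressions in $\alpha$ and $\beta$. If that fails, I will fix the ratio $u/s$ and study the function of a single variable via its sign at $0$ (small-$t$ expansion) and at $\infty$, with a derivative argument in between. The small-$t$ regime is the most delicate, because all terms vanish to the same order, so I will expand to leading order there first to confirm strictness.
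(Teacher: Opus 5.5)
Your Step 1 is correct and matches the first part of the paper's proof: the explicit weights, the metric case, and the observation that at most one weight can be nonpositive. There is one label slip: $\gamma\ge\alpha+\beta$ makes $a=d_{1,2}$, not $c$, the long edge, which is also what your displayed inequality uses.

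The genuine gap is everything after Step 1. The reduction to the boundary $\sqrt a=\sqrt b+\sqrt c$ by ``monotonicity in the long edge'' is not proved, and it is false for the natural quantities. Expanding at small $t$ with $b,c$ fixed gives $\tfrac12(\det Z_{tX})^2\,d\Mag(tX)/dt=t^4\,abc\,\bigl(4bc-(a-b-c)^2\bigr)+O(t^5)$ and $\det Z_{tX}=t^2\bigl(4bc-(a-b-c)^2\bigr)+O(t^3)$. Hence
\[
\lim_{t\to0}\frac{d\Mag(tX)}{dt}=\frac{2abc}{4bc-(a-b-c)^2}.
\]
This limit \emph{increases} with $a$ on $b+c<a<(\sqrt b+\sqrt c)^2$, which is inside your bad case for small $t$, and it blows up at the boundary. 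The numerator also increases in $a$ just past $a=b+c$. So the bad case does not simply worsen as $a$ grows. The reduction would need some other justification, such as a suitable normalization or a quasi-concavity argument, and you supply none.

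Beyond that, the ``two one-sided functions'' are never specified. An arbitrary split of $a\,z\,N_1N_2$ between the two negative terms need not yield two positive pieces. Step~3 proves no positivity at all; it only lists strategies. As you note yourself, everything vanishes to leading order on the boundary as $t\to0$, so crude convexity bounds are unlikely to close the argument.

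These are exactly the points the paper resolves, in three steps.
\begin{enumerate}
\item[(1)] It normalizes to $d_{0,1}=\alpha$, $d_{0,2}=\beta$, $d_{1,2}=\alpha+\beta+2$ with $0<\alpha\le\beta$; negative type is then $\alpha\beta\ge1$.
\item[(2)] It splits the long-edge coefficient as $(\alpha+1)+(\beta+1)$ and pairs each part with one of the two $w_0$-terms. This gives $q^\alpha(1-q^\beta)u_1\phi_{q,\alpha}(q^\alpha,q^\beta)+q^\beta(1-q^\alpha)u_2\phi_{q,\beta}(q^\beta,q^\alpha)$. It then proves that $\phi_{q,\alpha}(q^\alpha,z)$ is decreasing in $z$ (Lemma \ref{lem:3_point_phi_decreasing}). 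This is monotonicity in the \emph{other short edge} with the excess $d_{1,2}-d_{0,1}-d_{0,2}$ held fixed, not in the long edge. It reduces each piece to $\beta=1/\alpha$ or $\beta=\alpha$.
\item[(3)] It proves positivity on these curves by explicit $\sinh/\cosh$ decompositions into positive products (Lemmas \ref{lem:3_point_estimate_at_diagonal} and \ref{lem:3_point_estimate_at_boundary}).
\end{enumerate}
Without substitutes for these ingredients, your proposal reformulates the problem but does not prove the theorem.
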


The proof is divided into two parts: The first part reduces our monotonicity problem to certain inequalities. Then the second part proves the resulting inequalities (as Proposition \ref{prop:3_point}).

\subsection{The first part of the proof}

We are to prove that $d\Mag(tX)/dt > 0$ for all $t > 0$ if $d$ is a semimetric on a $3$-point set $X = \{ 0, 1, 2 \}$ of negative type. Under this assumption, $Z_{tX}$ is positive definite. Let $w = w(t) = Z_{tX}^{-1} \1$ be the unique weighting. Let $u_0, u_1, u_2$ be 
\begin{align*}
u_0 &= 1 - q^{d_{0,1}} - q^{d_{0,2}} + q^{d_{1,2}}, \\
u_1 &= 1 - q^{d_{0,1}} + q^{d_{0,2}} - q^{d_{1,2}}, \\
u_2 &= 1 + q^{d_{0,1}} - q^{d_{0,2}} - q^{d_{1,2}},
\end{align*}
where $q = e^{-t}$. Then each entry $w_i$ of $w = (w_i)$ is expressed as
\begin{align*}
w_0 &= \frac{(1 - q^{d_{1,2}})u_0}{\det Z_{tX}}, &
w_1 &= \frac{(1 - q^{d_{0,2}})u_1}{\det Z_{tX}}, &
w_2 &= \frac{(1 - q^{d_{0,1}})u_2}{\det Z_{tX}}.
\end{align*}
It is easy to see that the triangle inequality $d_{0,1} + d_{1,2} \ge d_{0,2}$ implies
$$
u_1 
\ge 1 - q^{d_{0,1}} + q^{d_{0,1} + d_{1,2}} - q^{d_{1,2}}
= (1 - q^{d_{0,1}})(1 - q^{d_{1,2}}) > 0.
$$
Thus, if $d$ is a metric on a $3$-point set, then the entries of the weighting are always positive, as known in \cite{L}. Now, from the formula of the derivative in Lemma \ref{lem:derivative_general}, it immediately follows that $d\Mag(tX)/dt > 0$ provided that $d$ is a metric.

\medskip

Henceforth we consider the case where $d$ is not a metric. To address this case, we use a special property of a $3$-point semimetric: We can express $d_{i, j}$ for distinct $i, j \in X = \{ 0, 1, 2 \}$ as
\begin{align*}
d_{0, 1} &= a_0 + a_1, &
d_{0, 2} &= a_0 + a_2, &
d_{1, 2} &= a_1 + a_2,
\end{align*}
in which $a_0, a_1, a_2$ are uniquely determined by
\begin{align*}
a_0 &= \frac{d_{0, 1} + d_{0, 2} - d_{1, 2}}{2}, &
a_1 &= \frac{d_{0, 1} + d_{1, 2} - d_{0, 2}}{2}, &
a_2 &= \frac{d_{0, 2} + d_{1, 2} - d_{0, 1}}{2}.
\end{align*}
Suppose the failure of a triangle inequality $a_0 < 0$ for example. Then the semimetric conditions $d_{0, 1} > 0$ and $d_{0, 2} > 0$ imply $a_1 > -a_0$ and $a_2 > -a_0$. Furthermore, we can normalize $a_0$, because the monotonicity of $\Mag(tX)$ is a property which is invariant under the rescaling of the distance. In summary, we can assume without loss of generality that 
$$
a_0 = -1 < 0 < a_1 \le a_2.
$$
To suppress notations, we introduce new variables $0 < \alpha \le \beta$ to express
\begin{align*}
a_1 &= \alpha + 1, &
a_2 &= \beta + 1.
\end{align*}
Note that 
\begin{align*}
d_{0, 1} &= \alpha, &
d_{0, 2} &= \beta, &
d_{1, 2} &= \alpha + \beta + 2.
\end{align*}

\begin{lem}
Suppose that a semimetric $d$ on $X = \{ 0, 1, 2 \}$ is expressed as above in terms of $a_0, a_1, a_2$. Then $d$ is of negative type if and only if 
$$
a_0a_1 + a_0a_2 + a_1a_2 \ge 0.
$$
In the case where $a_0 = -1$, $a_1 = \alpha + 1$ and $a_2 = \beta + 1$, the semimetric $d$ is of negative type if and only if
$$
\alpha \beta \ge 1.
$$
\end{lem}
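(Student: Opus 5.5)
We will use the excess matrix criterion of Lemma \ref{lem:neg_type_and_excess}(b): $d$ is of negative type if and only if $E^{(0)}$ is positive semidefinite. For $X=\{0,1,2\}$ the matrix $E^{(0)}$ is $2\times 2$ and indexed by $\{1,2\}$. Its diagonal entries are $2d_{1,0}=2(a_0+a_1)$ and $2d_{2,0}=2(a_0+a_2)$. Its off-diagonal entry is
$$
d_{1,0}+d_{2,0}-d_{1,2} = (a_0+a_1)+(a_0+a_2)-(a_1+a_2) = 2a_0 .
$$
Hence
$$
E^{(0)} = 2\left(\begin{array}{cc} a_0+a_1 & a_0 \\ a_0 & a_0+a_2 \end{array}\right).
$$

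A real symmetric $2\times 2$ matrix is positive semidefinite if and only if both diagonal entries and the determinant are nonnegative. The diagonal entries are $d_{0,1}>0$ and $d_{0,2}>0$, since $d$ is a semimetric on distinct points. So the condition reduces to nonnegativity of the determinant:
$$
(a_0+a_1)(a_0+a_2)-a_0^2 = a_0a_1+a_0a_2+a_1a_2 \ge 0 .
$$
This proves the first claim. The argument uses no symmetry beyond the choice $k=0$, so by Lemma \ref{lem:neg_type_and_excess}(a) nothing further is needed.

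For the second claim we substitute $a_0=-1$, $a_1=\alpha+1$ and $a_2=\beta+1$. This gives
$$
-(\alpha+1)-(\beta+1)+(\alpha+1)(\beta+1) = \alpha\beta-1 ,
$$
so the condition becomes $\alpha\beta\ge 1$. There is no real obstacle here. The only care needed is the observation that positivity of the diagonal entries is automatic, which is what makes the determinant condition alone equivalent to positive semidefiniteness.
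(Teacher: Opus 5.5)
Your proposal is correct and is essentially the paper's own proof: you compute $E^{(0)}$ explicitly and note that its diagonal entries $2d_{0,1}$ and $2d_{0,2}$ are positive, so positive semidefiniteness reduces to $a_0a_1+a_0a_2+a_1a_2 \ge 0$, and substituting then gives $\alpha\beta-1\ge 0$. Your appeal to Lemma \ref{lem:neg_type_and_excess}(a) is unnecessary, since part (b) already applies directly with $k=0$, but it does no harm.
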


\begin{proof}
It suffices to see the condition for an excess matrix to be positive semidefinite. The $0$th excess matrix becomes
$$
E^{(0)} =
2
\left(
\begin{array}{cc}
a_0 + a_1 & a_0 \\
a_0 & a_0 + a_2
\end{array}
\right).
$$
Since $d$ is a semimetric, the diagonal entries of $E^{(0)}$ are positive. Then, $E^{(0)}$ is positive semidefinite if and only if 
$$
\det E^{(0)} 
= 
(a_0 + a_1)(a_0 + a_2) - a_0^2
=
a_0a_1 + a_0a_2 + a_1a_2 \ge 0.
$$
A direct calculation shows the remaining claim.
\end{proof}

Substituting $d_{0, 1} = \alpha$, $d_{0, 2} = \beta$ and $d_{1, 2} = \alpha + \beta + 2$ into $u_i$, we get
\begin{align*}
u_0 &= 1 - q^\alpha - q^\beta + q^{\alpha + \beta + 2}, \\
u_1 &= 1 - q^\alpha + q^\beta - q^{\alpha + \beta + 2}, \\
u_2 &= 1 + q^\alpha - q^\beta - q^{\alpha + \beta + 2}.
\end{align*}
From $0 < \alpha \le \beta$, it follows $0 < u_1 \le u_2$. Now we have
\begin{align*}
&
\frac{(\det Z_{tX})^2}{2} \frac{d\Mag(tX)}{dt}
\\
&=
(\det Z_{tX})^2 (
d_{0, 1}q^{d_{0, 1}} w_0w_1 +
d_{0, 2}q^{d_{0, 2}} w_0w_2 +
d_{1, 2}q^{d_{1, 2}} w_1w_2
)
\\
&=
(\det Z_{tX})^2 (
\alpha q^\alpha w_0 w_1 +
\beta q^\beta w_0 w_2 +
(\alpha + \beta + 2) q^{\alpha + \beta + 2} w_1 w_2
)
\\
&=
(\det Z_{tX})^2 \big(
q^\alpha w_1 \big( \alpha w_0 + (\alpha + 1)q^{\beta + 2} w_2 \big)
+
q^\beta w_2 \big( \beta w_0 + (\beta + 1) q^{\alpha + 2} w_1 \big)
\big)
\\
&=
q^\alpha (1 - q^\beta) u_1
\big(
\alpha (1 - q^{\alpha + \beta + 2})u_0
+ (\alpha + 1) q^{\beta + 2} (1 - q^\alpha) u_2
\big)
\\
&\quad
+
q^\beta (1 - q^\alpha) u_2 
\big(
\beta (1 - q^{\alpha + \beta + 2})u_0 
+ (\beta + 1) q^{\alpha + 2} (1 - q^\beta) u_1
\big)
\\
&=
q^\alpha (1 - q^\beta) u_1  \phi_{q, \alpha}(q^\alpha, q^\beta)
+
q^\beta (1 - q^\alpha) u_2  \phi_{q, \beta}(q^\beta, q^\alpha),
\end{align*}
where $\phi_{q, \alpha}(x, z)$ is defined by
\begin{multline*}
\phi_{q, \alpha}(x, z)
\\
=
\alpha (1 - q^2 x z)(1 - x - z + q^2 xz)
+ 
(\alpha + 1) q^2 z (1 - x)(1 + x - z - q^2xz).
\end{multline*}
Thus, in order to get $d\Mag(tX)/dt > 0$, it suffices to show $\phi_{q,\alpha}(q^\alpha, q^\beta) > 0$ and $\phi_{q,\beta}(q^\beta, q^\alpha) > 0$.

\subsection{The second part of the proof}

By the preceding argument, we can complete the proof of Theorem \ref{thm:3_point} by proving the following proposition.

\begin{prop} \label{prop:3_point}
Let $\alpha$ and $\beta$ be real numbers such that $0 < \alpha \le \beta$ and $\alpha \beta \ge 1$, and let $q \in (0, 1)$.
\begin{enumerate}
\item[(a)]
We have $\phi_{q,\alpha}(q^\alpha, q^\beta) > 0$. 

\item[(b)]
We have $\phi_{q,\beta}(q^\beta, q^\alpha) > 0$.

\end{enumerate}
\end{prop}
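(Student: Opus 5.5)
The plan is to treat both parts as explicit inequalities in the single scale variable $t=-\log q>0$, after two preliminary reductions. First, I will split off the only term that can be negative. Writing
\[
1-x-z+q^2xz=(1-x)(1-z)-(1-q^2)xz,\qquad 1+x-z-q^2xz=(1-z)+x(1-q^2z),
\]
the second summand of $\phi_{q,\alpha}(x,z)$ is manifestly positive for $x,z,q\in(0,1)$. Likewise $\alpha(1-q^2xz)(1-x)(1-z)$ is positive. The only negative contribution is
\[
N=\alpha(1-q^2xz)(1-q^2)xz,
\]
which reflects the fact that $u_0$ may change sign when the triangle inequality fails. So (a) is equivalent to
\[
\alpha(1-q^2xz)(1-x)(1-z)+(\alpha+1)q^2z(1-x)\bigl((1-z)+x(1-q^2z)\bigr)>N
\]
at $x=q^\alpha$, $z=q^\beta$. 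Statement (b) has the same form with $\alpha\leftrightarrow\beta$, i.e. the prefactor $\beta$ and the roles of $x,z$ exchanged.

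Second, I will use the negative type condition $\alpha\beta\ge 1$ to reduce to a boundary case. For fixed $\alpha$ and $t$, I will study how the difference (positive part minus $N$) depends on $z=q^\beta$. Increasing $\beta$ decreases $z$. This shrinks $N$, which is proportional to $z$, while the main positive term $\alpha(1-q^2xz)(1-x)(1-z)$ grows. I will try to show the difference is decreasing in $z$ on the admissible range; if that fails, I will show it is at least bounded below by its value at the largest admissible $z$. This reduces (a) to the extremal case $\beta=\max(\alpha,1/\alpha)$, a one-parameter family in $\alpha$. For (b), where $\beta\ge 1$ holds automatically on the relevant region, the analogous extremal case is $\alpha=1/\beta$ when $\beta\ge1$.

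In the extremal case I will substitute $x=e^{-\alpha t}$, $z=e^{-t/\alpha}$ and multiply through by a suitable positive exponential to clear denominators. I will then compare both sides using elementary bounds such as
\[
1-e^{-u}\ge u e^{-u},\qquad 1-e^{-u}\le u,
\]
applied to the factors $1-x$, $1-z$ and $1-q^2$. A small-$t$ expansion shows the picture. The left side is dominated by the term
\[
(\alpha+1)q^2z(1-x)\cdot 2\approx 2\alpha(\alpha+1)t,
\]
while $N\approx 2\alpha(\alpha+\beta+2)t^2$. As $t\to\infty$, $N$ decays like $q^{\alpha+\beta}$, whereas $\alpha(1-x)(1-z)\to\alpha$. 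So the two regimes are safe, and the content of the proof lies at intermediate $t$.

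I expect the intermediate-$t$ comparison to be the main obstacle, particularly for (b). There the prefactor is the larger parameter $\beta$ and the positive compensating term carries $(1-x)=(1-q^\beta)$ but the smaller $z=q^\alpha$ replaced by the larger $q^\alpha$ in $N$. My first attempt will be to use $\alpha\beta\ge1$ in the form
\[
(1-q^\alpha)(1-q^\beta)\ge(1-q)^2,
\]
which holds by concavity of $s\mapsto\log(1-e^{-s})$ together with $\alpha+\beta\ge 2$ and $\alpha\beta\ge1$. This would bound the main positive term from below by an expression in $q$ alone. The inequality would then reduce to a polynomial inequality in $q$ and $z$ on $[0,1]^2$, which I would verify by grouping terms with a common factor $(1-q)$ and checking the signs of the remaining coefficients.
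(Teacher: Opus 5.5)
There is a genuine gap. Your reduction to the extremal cases, $\beta=\max\{\alpha,1/\alpha\}$ in (a) and $\alpha=1/\beta$ in (b), is exactly the paper's. But neither of the two steps that actually carry the proof is carried out, and the route you sketch for the second would fail.

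\textbf{Monotonicity in $z$.} This step is only hoped for. Your heuristic overlooks that the positive summand $(\alpha+1)q^2z(1-x)(\cdots)$ also shrinks as $z$ decreases, so it decides nothing. The paper proves the step by computing that $\partial_z\phi_{q,\alpha}(q^\alpha,z)$ is affine in $z$, and then showing both coefficients $\xi_1(q)$ and $2q^2\xi_2(q)$ are non-positive by derivative arguments in $q$.

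\textbf{Positivity at the extremal cases.} This is the more serious problem, and here your small-$t$ analysis is wrong. You have $(1-z)+x(1-q^2z)\approx 2(\beta+1)t$, not $2$. So the compensating term is $\approx 2\alpha(\alpha+1)(\beta+1)t^2$, the same order as $N\approx 2\alpha(\alpha+\beta+2)t^2$, and the difference is $2\alpha(\alpha\beta-1)t^2$. Small $t$ is therefore exactly where the negative type condition is sharp, not a safe regime. On the boundary $\alpha\beta=1$ to which you reduce, the $t^3$ coefficient cancels as well, so $\phi_{q,\alpha}(q^\alpha,q^{1/\alpha})$ vanishes to fourth order in $t$; for $\alpha=1$ it equals $(1-q^4)(1-q)^3(1+q)$.

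Replacing $1-e^{-u}$ by $ue^{-u}$ or by $u$ perturbs terms of size $t^2$ by amounts of size $t^3$. That cannot resolve a quantity of size $t^4$.

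The inequality $(1-q^\alpha)(1-q^\beta)\ge(1-q)^2$ you plan to use is also false. Concavity gives the opposite direction. In fact $u\mapsto\log(1-q^{e^u})$ is strictly concave, so $(1-q^\alpha)(1-q^{1/\alpha})<(1-q)^2$ for every $\alpha\neq1$ and every $q\in(0,1)$. For example, $q=\tfrac12$, $\alpha=\tfrac12$, $\beta=2$ gives $0.2197<0.25$.

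What is missing is an argument that handles this high-order cancellation exactly. The paper writes $\tfrac12 e^{(\alpha+1/\alpha+2)t}H_\alpha(e^{-t})$ as a sum of products of hyperbolic expressions of definite sign, each product having one factor vanishing to third order and one to first. It proves their signs from the monotonicity of $\sinh x/x$, using separate identities for $\alpha<1$ and $\alpha\ge1$. The remaining diagonal case $\alpha=\beta\ge1$ is handled by proving $G_\alpha(q)>0$ with a two-step derivative argument.
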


For the proposition, we prove some lemmas.

\begin{lem} \label{lem:3_point_phi_decreasing}
If $q \in (0, 1)$ and $\alpha > 0$, then the quadratic polynomial $\phi_{q, \alpha}(q^\alpha, z)$ in $z > 0$ is monotonically decreasing.
\end{lem}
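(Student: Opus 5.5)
Set $x = q^\alpha \in (0,1)$ and regard $\phi_{q,\alpha}(x,z)$ as a polynomial in $z$. Expanding the definition shows it has degree at most $2$ in $z$. To prove that it is decreasing for $z>0$, I will show that its derivative in $z$ is negative for every $z>0$.

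Write $s = q^2 \in (0,1)$. The two terms of $\phi_{q,\alpha}$ are handled separately.

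The first term is $\alpha (1 - sxz)(1 - x - (1 - sx)z)$. It is a product of two linear functions of $z$. Its derivative is
$$
-\alpha s x\bigl(1 - x - (1 - sx)z\bigr) - \alpha(1 - sx)(1 - sxz) = -\alpha\bigl[\,sx(1-x) + (1 - sx) - 2sx(1 - sx)z\,\bigr].
$$

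The second term is $(\alpha+1)s(1-x)\,z\bigl(1 + x - (1 + sx)z\bigr)$. Its derivative is
$$
(\alpha+1)s(1-x)\bigl(1 + x - 2(1 + sx)z\bigr).
$$

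Adding these, the derivative is linear in $z$. Its slope is
$$
2z\bigl[\alpha s x(1 - sx) - (\alpha+1)s(1-x)(1+sx)\bigr],
$$
and its value at $z=0$ is
$$
-\alpha\bigl(sx(1-x) + 1 - sx\bigr) + (\alpha+1)s(1-x)(1+x).
$$

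A linear function is negative on all of $(0,\infty)$ exactly when its value at $0$ is $\le 0$ and its slope is $\le 0$, with strictness in at least one. The plan is therefore to verify two inequalities.

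\textbf{The constant term.} We need
$$
(\alpha+1)s(1-x^2) \le \alpha\bigl(1 - sx^2\bigr).
$$
Rearranging, this is $s(1 - x^2) \le \alpha(1 - s)$, that is,
$$
q^2\bigl(1 - q^{2\alpha}\bigr) \le \alpha\bigl(1 - q^2\bigr).
$$
This is the step I expect to be the main obstacle. Put $f(\alpha) = \alpha(1-s) - s(1 - s^{\alpha})$. Then $f(0) = 0$, and $f'(\alpha) = (1 - s) + s^{1+\alpha}\log s$. The second derivative is $f''(\alpha) = s^{1+\alpha}(\log s)^2 > 0$, so $f'$ is increasing. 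Hence it suffices to check $f'(0) = 1 - s + s\log s \ge 0$. This holds because $g(s) = 1 - s + s\log s$ satisfies $g(1) = 0$ and $g'(s) = \log s < 0$ on $(0,1)$. If the convexity route runs into difficulty, the fallback is the direct estimate $1 - s^{\alpha} \le \alpha(1-s)/s$, which is the mean value theorem applied to $u \mapsto s^{u}$, possibly with separate cases $\alpha \le 1$ and $\alpha \ge 1$ via Bernoulli's inequality. Both sides vanish at $\alpha=0$, so the constant $\tfrac{1-s}{s}$ matters and should be checked carefully.

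\textbf{The slope.} We need
$$
\alpha x(1 - sx) \le (\alpha+1)(1-x)(1+sx).
$$
This is not immediate, because $x$ can be close to $1$. If it fails for some parameters, I will instead show that the derivative is negative at the endpoint relevant to the application. Since $\phi$ is quadratic in $z$, it would then suffice to show that the vertex $z^{*}$ satisfies $z^{*} \le 0$, or to restrict to $z \le 1$ (the only range used, since $z = q^\beta < 1$) and check the sign of the derivative at $z = 1$.

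Combining the negativity of the derivative with these case checks yields the claimed monotonicity.
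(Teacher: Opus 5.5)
Your expansion of $\partial_z\phi_{q,\alpha}(q^\alpha,z)$ is correct and matches the paper's: the constant term is $\xi_1(q)=-\alpha+(1+\alpha)q^2-q^{2+2\alpha}$ and the coefficient of $z$ is $2q^2\xi_2(q)$. Your convexity-in-$\alpha$ argument for the constant term is also valid. It differs from the paper, which differentiates in $q$: $\xi_1'(q)=2(1+\alpha)q(1-q^{2\alpha})>0$ and $\xi_1(1)=0$.

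The gap is the slope. You never prove $\alpha x(1-sx)\le(\alpha+1)(1-x)(1+sx)$, and that is half of the lemma. As you noticed, it is false if $x,s\in(0,1)$ are independent: let $x\to1$ with $s$ fixed, and the left side tends to $\alpha(1-s)>0$ while the right side tends to $0$. The missing idea is to use the coupling $x=q^\alpha$, $s=q^2$, which turns the inequality into a one-variable statement in $q$:
\[
\xi_2(q)=-(1+\alpha)+(1+2\alpha)q^\alpha-(1+\alpha)q^{2+\alpha}+q^{2+2\alpha}\le 0 .
\]
Here $\xi_2(1)=0$. Put $h(q)=q^{1-\alpha}\xi_2'(q)=\alpha(1+2\alpha)-(1+\alpha)(2+\alpha)q^2+2(1+\alpha)q^{2+\alpha}$. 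Then $h'(q)=-2(1+\alpha)(2+\alpha)q(1-q^\alpha)<0$ and $h(1)=\alpha^2>0$. Hence $\xi_2'>0$ on $(0,1)$, so $\xi_2(q)<0$. The inequality you doubted therefore holds for all $q\in(0,1)$ and $\alpha>0$; this is exactly how the paper proves it.

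Your fallback plans do not repair the gap.
\begin{itemize}
\item Restricting to $z\le 1$ proves a weaker statement than the lemma, which is asserted for all $z>0$.
\item Checking the derivative at $z=1$ requires the sign of $\xi_1(q)+2q^2\xi_2(q)$. That is an inequality of the same coupled kind, and you have not addressed it.
\item The vertex option cannot apply in the case it is meant for. If the leading coefficient $q^2\xi_2$ were positive, then since $\partial_z\phi_{q,\alpha}(q^\alpha,0)=\xi_1<0$, the vertex $-\xi_1/(2q^2\xi_2)$ would be positive, not $\le 0$.
\end{itemize}
As written, the proposal establishes only that $\partial_z\phi_{q,\alpha}(q^\alpha,z)$ is negative at $z=0$, not monotone decrease on $(0,\infty)$.
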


\begin{proof}
We have
\begin{align*}
\frac{\partial \phi_{q, \alpha}(q^\alpha, z)}{\partial z} 
&=
- \alpha + (1 + \alpha)q^2 - q^{2 + 2\alpha}
\\
&\quad
+ 2q^2
\big(
 -1 - \alpha 
+ (1 + 2\alpha)q^\alpha - (1 + \alpha)q^{2 + \alpha} + q^{2 + 2\alpha}
\big) z.
\end{align*}
Hence the lemma will be established by showing that the following functions $\xi_1(q)$ and $\xi_2(q)$ are non-positive
\begin{align*}
\xi_1(q) &= - \alpha + (1 + \alpha) q^2 - q^{2 + 2\alpha}, \\
\xi_2(q) &=  -1 - \alpha 
+ (1 + 2\alpha)q^\alpha - (1 + \alpha)q^{2 + \alpha} + q^{2 + 2\alpha}.
\end{align*}
For $\xi_1(q)$, its derivative is 
$$
\xi'_1(q) = 2(1 + \alpha)q(1 - q^{2\alpha}) > 0.
$$
Hence we find $\xi_1(q) \le \xi_1(1) = 0$. For $\xi_2(q)$, we have
\begin{align*}
\xi_2'(q) &= q^{\alpha - 1}
\big(
\alpha(1 + 2 \alpha) -(1 + \alpha)(2 + \alpha)q^2 + 
2 (1 + \alpha) q^{2 + \alpha}
\big).
\end{align*}
If we put $\xi_3(q) = \xi_2'(q)q^{1 - \alpha}$, then 
$$
\xi_3'(q) = -2 (1 + \alpha)(2 + \alpha)q(1 - q^\alpha) < 0.
$$
From $\xi_3(q) \ge \xi_3(1) = \alpha^2$, it follows that $\xi_2'(q) > 0$ and $\xi_2(q) \le \xi_2(1) = 0$.
\end{proof}

\begin{lem} \label{lem:3_point_estimate_at_diagonal}
For $q \in (0, 1)$ and $\alpha > 0$, we define $G_\alpha(q)$ by
$$
G_\alpha(q) = \phi_{q, \alpha}(q^\alpha, q^\alpha)/(1 - q^{2 + 2\alpha})
=
\alpha - 2\alpha q^\alpha + (\alpha + 1)q^{\alpha + 2} - q^{2 \alpha + 2}.
$$
If $\alpha \ge 1$, then $G_\alpha(q) > 0$.
\end{lem}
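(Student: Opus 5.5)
The plan is to mimic the proof of Lemma \ref{lem:3_point_phi_decreasing}: show that $G_\alpha(q)$ vanishes at the endpoint $q = 1$ and is strictly decreasing on $(0, 1)$, which gives $G_\alpha(q) > G_\alpha(1) = 0$. Before that I would quickly check the stated closed form. Substituting $x = z = q^\alpha$ into $\phi_{q,\alpha}$ gives two factors:
$$
1 - 2q^\alpha + q^{2\alpha + 2}
\quad\text{and}\quad
1 - q^{2\alpha + 2}.
$$
The second summand of $\phi_{q,\alpha}$ contributes $(\alpha+1)q^{\alpha+2}(1-q^\alpha)(1-q^{2\alpha+2})$. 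Factoring out $1 - q^{2\alpha+2}$ and expanding should reproduce $\alpha - 2\alpha q^\alpha + (\alpha+1)q^{\alpha+2} - q^{2\alpha+2}$. This step is a routine expansion.

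First I evaluate at $q = 1$:
$$
G_\alpha(1) = \alpha - 2\alpha + (\alpha + 1) - 1 = 0.
$$
Next I differentiate and factor out $q^{\alpha - 1}$:
$$
G_\alpha'(q) = q^{\alpha - 1} H(q),
\qquad
H(q) = -2\alpha^2 + (\alpha+1)(\alpha+2) q^2 - 2(\alpha+1) q^{\alpha+2}.
$$
It then suffices to show $H(q) < 0$ on $(0,1)$.

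For this I use the same second-derivative trick as for $\xi_3$ in Lemma \ref{lem:3_point_phi_decreasing}. We have
$$
H'(q) = 2(\alpha+1)(\alpha+2)\, q\, (1 - q^\alpha) > 0
\quad\text{for } q \in (0,1),
$$
so $H$ is strictly increasing on $(0,1)$. Hence $H(q) < H(1)$ there, and
$$
H(1) = -2\alpha^2 + (\alpha^2 + 3\alpha + 2) - 2\alpha - 2 = \alpha(1 - \alpha).
$$
This is exactly where the hypothesis $\alpha \ge 1$ enters: it gives $H(1) \le 0$. Therefore $H(q) < 0$, so $G_\alpha' < 0$ on $(0,1)$, and $G_\alpha(q) > G_\alpha(1) = 0$.

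The only delicate point is the sign of $H(1) = \alpha(1-\alpha)$, which explains the restriction $\alpha \ge 1$. For $\alpha < 1$ the argument breaks down near $q = 1$, and presumably the lemma fails there. I expect no other obstacles; the rest is bookkeeping of exponents in the derivative computations.
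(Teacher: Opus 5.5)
Your proposal is correct and follows the paper's proof essentially verbatim: you show $G_\alpha(1)=0$, write $G_\alpha'(q)=q^{\alpha-1}H(q)$, prove that $H$ is increasing via $H'(q)=2(\alpha+1)(\alpha+2)q(1-q^\alpha)>0$, and use $H(1)=\alpha(1-\alpha)\le 0$ for $\alpha\ge 1$. Your check of the closed form for $G_\alpha$ is also correct, and your remark that the lemma fails for $\alpha<1$ is right, since then $G_\alpha'>0$ near $q=1$.
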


\begin{proof}
We compute the derivative of $G_\alpha(q)$ to get
$$
G'_\alpha(q) =
q^{\alpha - 1}
\big(
-2 \alpha^2 + (1 + \alpha)(2 + \alpha) q^2 - 2 (1 + \alpha) q^{\alpha+2}
\big).
$$
If we put $g_\alpha(q) = G'_\alpha(q)q^{1 - \alpha}$, then 
$$
g'_\alpha(q)
=
2(1 + \alpha)(2 + \alpha)q(1 - q^\alpha) > 0.
$$
Hence $g_\alpha(q) < g_\alpha(1) = \alpha(1 - \alpha) \le 0$ by $\alpha \ge 1$. As a result, we find that $G'_\alpha(q) < 0$ and $G_\alpha(q) > G_\alpha(1) = 0$.
\end{proof}

\begin{lem} \label{lem:3_point_estimate_at_boundary}
For $q \in (0, 1)$ and $\alpha > 0$, we define $H_\alpha(q)$ by
\begin{align*}
H_\alpha(q) 
&= 
\phi_{q, \alpha}(q^\alpha, q^{1/\alpha})
\\
&=
\alpha (1 - q^{2 + \alpha + \frac{1}{\alpha}})
(1 - q^\alpha - q^{\frac{1}{\alpha}} + q^{2 + \alpha + \frac{1}{\alpha}})
\\
&\quad
+ 
(\alpha + 1) q^{2 + \frac{1}{\alpha}} (1 - q^\alpha)
(1 + q^\alpha - q^{\frac{1}{\alpha}} - q^{2 + \alpha + \frac{1}{\alpha}}).
\end{align*}
Then $H_\alpha(q) > 0$.
\end{lem}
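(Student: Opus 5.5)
We aim to prove $H_\alpha(q)>0$ by writing it as an exponential sum in $s=-\log q>0$ and counting its zeros with a Descartes-type rule of signs. Put $x=q^\alpha=e^{-\alpha s}$, $z=q^{1/\alpha}=e^{-s/\alpha}$ and $y=q^2xz=e^{-(2+\alpha+1/\alpha)s}$. Then
$$
H_\alpha=\alpha(1-y)(1-x-z+y)+(\alpha+1)q^2z(1-x)(1+x-z-y).
$$
The second summand is positive, since $1-x>0$ and $1+x-z-y>(1-z)+(x-y)>0$. The first summand can be negative: near $s=0$ one has $1-x-z+y\approx-2s$. So positivity cannot be read off termwise.

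The first step is to expand near $s=0$. At order $s^2$ the first summand gives $-2(\alpha+1)^2s^2$ and the second gives $+2(\alpha+1)^2s^2$, so these cancel exactly. This is consistent with $\alpha\beta=1$ being the boundary of the negative type region. Hence $H_\alpha$ vanishes to order at least $3$ at $s=0$. I would compute the $s^3$ coefficient explicitly and check that it is positive for every $\alpha>0$. I expect it to be a rational expression in $\alpha$ that is visibly positive after clearing denominators.

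Next, expand $H_\alpha$ completely as
$$
H_\alpha(e^{-s})=\sum_k c_k e^{-\lambda_k s}.
$$
The exponents $\lambda_k$ are nonnegative combinations of $\alpha$, $1/\alpha$ and $2$, for instance $0,\ \alpha,\ 1/\alpha,\ 2+1/\alpha,\ 2+\alpha+1/\alpha,\ 2+2\alpha+1/\alpha,\ \dots$. The coefficients are $\pm\alpha$, $\pm(\alpha+1)$ or $\pm(2\alpha+1)$. The constant term is $c_0=\alpha>0$, which gives $H_\alpha\to\alpha>0$ as $s\to\infty$. Collect equal exponents first. Then, in each ordering regime of the exponents, list the coefficients by increasing $\lambda_k$ and count sign changes. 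The ordering depends on how $\alpha$ compares with $1/\alpha$ and with a few small constants, so I would split into the cases $\alpha\ge1$ and $\alpha<1$, subdividing further only if needed.

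By the generalized Descartes rule of signs for real exponential sums (Laguerre), the number of real zeros of $s\mapsto\sum c_ke^{-\lambda_ks}$, counted with multiplicity, is at most the number of sign changes. If the count is at most $3$ in each regime, the zero of order $3$ at $s=0$ uses up the whole budget. Then $H_\alpha$ has no zero for $s>0$, and since it tends to $\alpha>0$, it is positive throughout, which proves the lemma.

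The main obstacle is the sign-change count. There are roughly ten exponentials, and coincidences or reorderings of exponents as $\alpha$ varies could produce more than three sign changes. If that happens, my fallback follows the style of Lemmas \ref{lem:3_point_phi_decreasing} and \ref{lem:3_point_estimate_at_diagonal}. I would differentiate $H_\alpha$ in $s$ after multiplying by a suitable exponential $e^{\lambda s}$, chosen so that one term drops out. This lowers the number of sign changes step by step, and at each step I would check the sign of the resulting function at $s=0$ using the Taylor coefficients computed in the first step.
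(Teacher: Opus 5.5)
\textbf{There is a genuine gap: the quantitative claims your plan rests on are both false.}

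\emph{The $s^3$ coefficient is zero.} After cancellation, $H_\alpha$ is a sum of nine exponentials,
\[
H_\alpha=\alpha-\alpha x-\alpha z+(\alpha+1)q^2z-(\alpha+1)q^2z^2-xy+(2\alpha+1)yz-(\alpha+1)q^2yz+y^2 .
\]
Writing it as $\sum_k c_k e^{-\lambda_k s}$, one checks that $\sum_k c_k\lambda_k^m=0$ for $m=0,1,2,3$. So the $s^3$ coefficient you intend to show positive vanishes identically, and in fact
\[
H_\alpha(e^{-s})=\frac{(\alpha+1)^3(2\alpha+1)}{3\alpha}s^4+O(s^5).
\]

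\emph{The sign-change count is always $6$.} Since the zero at $s=0$ has order $4$, your target of at most $3$ sign changes can never be met. The actual count is exactly $6$ for every $\alpha>0$. The exponents
\[
0<\frac1\alpha<2+\frac1\alpha<2+\frac2\alpha<\alpha+\frac2\alpha+2<\alpha+\frac2\alpha+4<2\alpha+\frac2\alpha+4
\]
always carry the alternating signs $+,-,+,-,+,-,+$. The two remaining negative terms $-\alpha x$ and $-xy$, and any coincidences of exponents, neither add nor remove a change. For example,
\[
H_1=1-2q+2q^3-2q^4+2q^5-2q^7+q^8=(1-q)^4(1+q)^2(1+q^2)
\]
has six sign changes but only the fourfold root $q=1$. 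So Laguerre's rule leaves room for two further zeros in every regime. No case split on the ordering of exponents can fix this.

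\emph{The fallback does not close the gap.} Multiplying by $e^{\lambda s}$ and differentiating is exactly the proof of Laguerre's rule, so on its own it cannot beat the bound; it helps only when combined with endpoint signs. Four such steps absorb the order-$4$ zero at $s=0$ without losing zeros in $(0,\infty)$. But the first coefficient stays positive throughout, so they necessarily leave a pattern $(+,-,+)$. The resulting function is positive both at $s=0$ (its value there is $24$ times the $s^4$ coefficient) and as $s\to+\infty$, so parity still allows two positive zeros. Excluding them would need a fifth step and a sign check at $s=0$. That check involves the $s^5$ coefficient and $\alpha$-dependent choices of the multipliers, across several orderings of the exponents. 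None of this is done, and it is not clear that it closes.

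\emph{What the paper does instead.} The paper writes $\frac12e^{(\alpha+\frac1\alpha+2)t}H_\alpha(e^{-t})$ as a sum of two products. Each product is an $O(t^3)$ factor, such as $\sinh((\alpha+1)t)-(\alpha+1)\sinh t$, whose sign follows from the monotonicity of $\sinh x/x$, times an $O(t)$ factor of definite sign. The grouping differs for $\alpha<1$ and $\alpha\ge1$. This structure is what your approach lacks, and it also explains the fourth-order vanishing at $t=0$.
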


\begin{proof}
We prove the inequality case by case:
\begin{enumerate}
\item[(i)]
The case where $0 < \alpha < 1$; and

\item[(ii)]
The case where $\alpha \ge 1$.

\end{enumerate}

(i) 
In the case that $0 < \alpha < 1$, returning to the notation by using $t > 0$ from $q = e^{-t}$, we can verify the following formula
\begin{align*} 
&\frac{1}{2} e^{(\alpha+\frac{1}{\alpha}+2)t}H_\alpha(e^{-t})
\\
&=
\left( 
\sinh {\textstyle ((\alpha+1)t) }
- 
(\alpha+1) \sinh t  
\right) 
\\
&
\quad
\times
\left( 
(\alpha+1) 
\sinh {\textstyle \left(\left(\frac1\alpha +1 \right)t\right) }
- 
\cosh {\textstyle \left(\left(\frac1\alpha +1 \right)t\right) }
+ 
\cosh {\textstyle ((\alpha+1)t) }
\right) 
\\
&
\quad\quad
+
\left( 
\alpha 
\sinh {\textstyle \left(\left(\frac1\alpha +1 \right)t\right) }
- 
\sinh ((\alpha+1)t) 
\right)
\left( 
\alpha \sinh t 
+ 
\cosh ((\alpha+1)t) 
- 
\cosh t  
\right). 
\end{align*}
We then observe that this expression is the sum of products of positive terms: We know that the function $x \to \frac{\sinh x}{x}$ is strictly monotonically increasing for $x > 0$. Thus, if $0 < \alpha < 1$ and $t > 0$, then we get
\begin{align*}
\sinh((\alpha + 1)t) - (\alpha + 1) \sinh t &> 0, \\
\alpha \sinh {\textstyle \left( \left( \frac{1}{\alpha} + 1 \right)t \right) }
- \sinh ((\alpha + 1)t) & > 0.
\end{align*}
For $t > 0$, we also have
\begin{align*}
&
(\alpha+1) 
\sinh {\textstyle \left(\left(\frac1\alpha +1 \right)t\right) }
- 
\cosh {\textstyle \left(\left(\frac1\alpha +1 \right)t\right) }
+ 
\cosh {\textstyle ((\alpha+1)t) }
\\
&=
\frac{1}{2} \alpha e^{\left( \frac{1}{\alpha} + 1 \right)t}
-
\frac{1}{2}(\alpha + 2) e^{- \left( \frac{1}{\alpha} + 1 \right)t}
+
\cosh ((\alpha + 1)t)
\\
&>
\frac{1}{2}\alpha - \frac{1}{2}(\alpha + 2) + \cosh((\alpha + 1)t)
=
\cosh((\alpha + 1)t) - 1 > 0.
\end{align*}
Since $\cosh x$ is strictly monotonically increasing in $x > 0$, we have
$$
\alpha \sinh t 
+ 
\cosh ((\alpha+1)t) 
- 
\cosh t
> 0.
$$
Hence we conclude $H_\alpha(q) > 0$ for all $0 < q < 1$ and $0 < \alpha < 1$.

(ii)
In the case where $\alpha \ge 1$, we use another formula 
\begin{align*}
&\frac{1}{2} e^{(\alpha+\frac{1}{\alpha}+2)t}
H_\alpha(e^{-t})
\\
&=
\alpha 
\left( 
\sinh {\textstyle \left(\left(\frac1\alpha +1 \right)t\right) }
- 
\left(  \frac{1}{\alpha} +1 \right) \sinh t  
\right) 
\\
&
\quad
\times 
\left( 
(\alpha+1) 
\sinh {\textstyle \left(\left(\frac1\alpha +1 \right)t\right) }
- 
\cosh {\textstyle \left(\left(\frac1\alpha +1 \right)t\right) }
+ 
\cosh ((\alpha+1)t) 
\right) 
\\
&
\quad\quad
+ 
\left( 
\sinh ((\alpha+1)t) 
- 
\alpha 
\sinh {\textstyle \left(\left(\frac1\alpha +1 \right)t\right) }
\right) 
\\
& 
\quad\quad\quad
\times
\left( 
(\alpha+1) 
\sinh {\textstyle \left(\left(\frac1\alpha +1 \right)t\right) }
- 
\cosh {\textstyle \left(\left(\frac1\alpha +1 \right)t\right) }
- 
\alpha 
\sinh t 
+ 
\cosh t 
\right).  
\end{align*}
We then verify that this expression is the sum of a positive term and a non-negative term: Because $\frac{\sinh x}{x}$ is strictly monotonically increasing in $x \ge 0$, we have
\begin{align*}
\sinh {\textstyle \left(\left(\frac1\alpha +1 \right)t\right) }
- 
\left(  \frac{1}{\alpha} +1 \right) \sinh t  
&>
0,
\\
\sinh ((\alpha+1)t) 
- 
\alpha 
\sinh {\textstyle \left(\left(\frac1\alpha +1 \right)t\right) }
&\ge
0,
\end{align*}
provided that $\alpha \ge 1$. Since $\sinh x$ and $\cosh x$ are strictly monotonically increasing in $x > 0$, we use $\alpha \ge 1$ to get
$$
(\alpha+1) 
\sinh {\textstyle \left(\left(\frac1\alpha +1 \right)t\right) }
- 
\cosh {\textstyle \left(\left(\frac1\alpha +1 \right)t\right) }
+ 
\cosh ((\alpha+1)t)
>
0. 
$$
Finally, because $\sinh x$ is monotonically increasing in $x > 0$, we have
\begin{align*}
&
(\alpha+1) 
\sinh {\textstyle \left(\left(\frac1\alpha +1 \right)t\right) }
- 
\cosh {\textstyle \left(\left(\frac1\alpha +1 \right)t\right) }
- 
\alpha 
\sinh t 
+ 
\cosh t 
\\
&=
\alpha
\left(
\sinh {\textstyle \left(\left(\frac1\alpha +1 \right)t\right) }
-
\sinh t
\right)
- e^{ - \left( \frac{1}{\alpha} + 1 \right)t}
+ \cosh t
\\
&>
\alpha
\left(
\sinh {\textstyle \left(\left(\frac1\alpha +1 \right)t\right) }
-
\sinh t
\right)
- 1
+ \cosh t
> 0.
\end{align*}
Hence $H_\alpha(q) > 0$ for $q \in (0, 1)$ and $\alpha \ge 1$.
\end{proof}

We are now in the position to prove Proposition \ref{prop:3_point}.

\begin{proof}[Proof of Proposition \ref{prop:3_point}]
To prove (a), we think of $\phi_{q, \alpha}(q^\alpha, q^\beta)$ as $\phi_{q, \alpha}(q^\alpha, z)$ with $z = q^\beta$ substituted. By Lemma \ref{lem:3_point_phi_decreasing}, the function $\phi_{q, \alpha}(q^\alpha, z)$ is monotonically decreasing in $z$. Since $\beta$ is subject to $0 < \alpha \le \beta$ and $\alpha \beta \ge 1$, we have
$$
\beta \ge \max\bigg\{ \alpha, \frac{1}{\alpha} \bigg\}
=
\left\{
\begin{array}{ll}
\alpha, & (\alpha \ge 1) \\
\frac{1}{\alpha}. & (0 < \alpha < 1)
\end{array}
\right.
$$
Now (a) is proved by the following estimates of $\phi_{q, \alpha}(q^\alpha, q^\beta)$.
\begin{itemize}
\item
In the case that $\alpha \ge 1$, we have $q^\beta \le q^\alpha$. Hence Lemma \ref{lem:3_point_estimate_at_diagonal} implies
$$
\phi_{q, \alpha}(q^\alpha, q^\beta)
\ge
\phi_{q, \alpha}(q^\alpha, q^\alpha)
=
(1 - q^{2 + 2\alpha}) G_\alpha(q) > 0.
$$

\item
In the case that $0 < \alpha < 1$, we have $q^\beta \le q^{\frac{1}{\alpha}}$. Hence Lemma \ref{lem:3_point_estimate_at_boundary} implies
$$
\phi_{q, \alpha}(q^\alpha, q^\beta)
\ge
\phi_{q, \alpha}(q^\alpha, q^{\frac{1}{\alpha}})
=
H_\alpha(q) > 0.
$$

\end{itemize}

To prove (b), we think of $\phi_{q, \beta}(q^\beta, q^\alpha)$ as $\phi_{q, \beta}(q^\beta, x)$ with $x = q^\alpha$ substituted. By Lemma \ref{lem:3_point_phi_decreasing}, the function $\phi_{q, \beta}(q^\beta, x)$ is monotonically decreasing in $x$. Since $\alpha \ge \frac{1}{\beta}$, we get $q^\alpha \le q^{\frac{1}{\beta}}$. Therefore we conclude 
$$
\phi_{q, \beta}(q^\beta, q^\alpha) 
\ge 
\phi_{q, \beta}(q^\beta, q^{\frac{1}{\beta}})
=
H_\beta(q) > 0
$$
by using Lemma \ref{lem:3_point_estimate_at_boundary}.
\end{proof}

\section{The case of $4$ points and more}
\label{sec:4_point_and_more}

This section presents examples of negative type semimetrics $d$ on an $n$-point set $X_n$ with $n \ge 4$ whose magnitude functions are not monotonically increasing.

\subsection{Examples of $4$-point semimetric spaces}
\label{subsec:4_point_example}

In view of Proposition \ref{prop:Schoenberg_embedding}, we construct a semimetric $d$ on a $4$-point set $X_4 = \{ 0, 1, 2, 3 \}$ by taking $4$ points in an Euclidean space. Let us consider the following vectors $v_0, v_1, v_2, v_3$ in $\R^2$
\begin{align*}
v_0
&=
\left(
\begin{array}{c}
0 \\ 0
\end{array}
\right),
&
v_1 
&=
\left(
\begin{array}{c}
1 \\ 0
\end{array}
\right),
&
v_2 
&=
\left(
\begin{array}{c}
2 \\ s
\end{array}
\right),
&
v_3 
&=
\left(
\begin{array}{c}
4 \\ 4s
\end{array}
\right),
\end{align*}
where $s \in \R$ is a parameter. We then put $d_{i, j} = \lVert v_i - v_j \rVert^2$ for $i, j \in X_4$. This $d$ defines a semimetric on $X_4$ for any $s$. The distance matrix is calculated as follows
$$
D =
\left(
\begin{array}{cccc}
0 & 1 & 4 + s^2 & 16(1 + s^2) \\
1 & 0 & 1 + s^2 & 9 + 16s^2 \\
4 + s^2 & 1 + s^2 & 0 & 4 + 9s^2 \\
16(1 + s^2) & 9 + 16s^2 & 4 + 9s^2 & 0
\end{array}
\right).
$$
For $d$ on $X_4$ to be a metric, $\{ v_0, v_1, v_3, v_4 \} \subset \R^2$ must form a non-obtuse set by Lemma \ref{lem:metric_condition}. However, the angle $\angle v_0 v_1 v_2$ is obtuse, and $d$ on $X_4$ is not a metric. We indeed have
$$
d_{0, 1} + d_{1, 2} - d_{0, 2}
=
1 + (1 + s^2) - (4 + s^2)
=
-2 < 0.
$$
Since the vectors $v_1, v_2, v_3$ are in $\R^2$ with the standard positive definite inner product, the $0$th excess matrix $E^{(0)}$ 
$$
E^{(0)}
=
2 \mathrm{Gram}(v_1, v_2, v_3)
=
2 (\langle v_i, v_j \rangle)_{i, j = 1}^3
$$
is positive semidefinite by design. Hence the semimetric on $X_4$ is negative type.

Now, for a particular value of the parameter $s$, we can see that the small-scale limit of $d\Mag(tX_4)/dt$ becomes negative, so that $\Mag(tX_4)$ is decreasing near $t = 0$ and not monotonically increasing. To compute the small-scale limit of the derivative, we follow the idea of Roff and Yoshinaga in \cite{RY}: First of all, we can express the magnitude function in $t > 0$ as
$$
\Mag(tX_4) 
=
\langle \1, Z_{tX_4}^{-1} \1 \rangle
=
\frac{\langle \1, \mathrm{adj}(Z_{tX_4}) \1 \rangle}
{\det Z_{tX_4}},
$$
where $\mathrm{adj}(Z_{tX_4})$ means the adjugate. Then we consider the series expansions of $\mathrm{adj}(Z_{tX_4})$ and $\det Z_{tX_4}$ in $t$. In the present case, their leading terms are of degree $4$ at least, and have the following forms
\begin{align*}
\langle \1, \mathrm{adj}(Z_{tX_4}) \1 \rangle
&=
C t^4 + H t^5 + O(6), &
\det Z_{tX_4}
&=
C' t^4 + H't^5 + O(6),
\end{align*}
where $C, H, C', H'$ are polynomials in $d_{i, j}$, and $O(6)$ denotes the terms of degree $6$ and higher. A direct computation (by using a computer software) gives us
\begin{align*}
C &= 64 s^2(3 + 22 s^2 + 27 s^4), &
H &=  -64 s^2 (-55 + 91 s^2 + 543 s^4 + 429 s^6),
\\
C' &= 128 s^2(1 + 8s^2 + 9s^4), &
H' &= -128 s^2 (-20 + 49 s^2 + 217 s^4 + 156 s^6).
\end{align*}
Thus, if $s \neq 0$, then L'hopital's theorem yields
$$
\lim_{t \to 0} \Mag(tX_4) 
=
\frac{C}{C'} 
=
\frac{3 + 22 s^2 + 27 s^4}{2(1 + 8s^2 + 9s^4)}.
$$
Under the assumption $s \neq 0$, the derivative $d\Mag(tX_4)/dt$ becomes
$$
\frac{d\Mag(tX_4)}{dt}
=
\frac{d}{dt}
\left(
\frac{C + Ht + O(2)}{C' + H't + O(2)}
\right)
=
\frac{HC' - CH' + O(1)}{(C' + H't + O(2))^2}.
$$
Therefore the small-scale limit of the derivative turns out to be
$$
\lim_{t \to 0}\frac{d\Mag(tX_4)}{dt}
=
\frac{HC' - CH'}{(C')^2}
=
\frac{(1 + s^2)(-5 + 61 s^2 + 352s^4 + 621 s^6 + 351 s^8)}
{2(1 + 8s^2 + 9s^4)^2}.
$$
This tends to $-5/2$ as $s \to 0$, and hence is negative when $s \neq 0$ is small enough. As a result, $\Mag(tX_4)$ is not monotonically increasing in $t$ for such an $s$.

\subsection{Examples of $5$-point semimetric spaces}
\label{subsec:5_point_example}

The example presented here is based on one given by Ohyama. As in the $4$-point case, we construct a semimetric $d$ on a $5$-point set $X_5 = \{ 0, 1, 2, 3, 4 \}$ by taking $5$ points in $\R^2$ as
\begin{align*}
v_0
&=
\left(
\begin{array}{c}
0 \\ 0
\end{array}
\right),
&
v_1 
&=
\left(
\begin{array}{c}
1 \\ 0
\end{array}
\right),
&
v_2 
&=
\left(
\begin{array}{c}
0 \\ 1
\end{array}
\right),
&
v_3 
&=
\left(
\begin{array}{c}
1 \\ 1
\end{array}
\right),
&
v_4 
&=
\left(
\begin{array}{c}
s \\ s
\end{array}
\right),
\end{align*}
where $s \in \R$ is a parameter. Then $d_{i, j} = \lVert v_i - v_j \rVert^2$ for $i, j \in X_5$ gives rise to semimetric on $X_5$ if and only if $s \neq 0, 1$. The distance matrix is 
$$
D 
=
\left(
\begin{array}{ccccc}
0 & 1 & 1 & 2 & 2s^2 \\
1 & 0 & 2 & 1 & s^2 + (1 - s)^2 \\
1 & 2 & 0 & 1 & s^2 + (1 - s)^2 \\
2 & 1 & 1 & 0 & 2(1 - s)^2 \\
2s^2 & s^2 + (1 - s)^2 & s^2 + (1 - s)^2 & 2(1 - s)^2 & 0
\end{array}
\right).
$$
As is mentioned in Remark \ref{rem:non_obtuse}, any non-obtuse set in $\R^2$ contains at most $2^2 = 4$ points. Therefore $\{ v_0, v_1, \ldots, v_4 \} \subset \R^2$ cannot be a non-obtuse set, and $d$ on $X_5$ is not a metric by Lemma \ref{lem:metric_condition}. One can also verify this directly by computing
\begin{align*}
d_{0, 1} + d_{1, 4} - d_{0, 4}
&=
1 + s^2 + (1 - s)^2 - 2s^2 = 2(1 - s), 
\\
d_{0, 4} + d_{4, 3} - d_{0, 3}
&=
2s^2 + 2(1 - s)^2 - 2
= 4s(s - 1), 
\\
d_{4, 0} + d_{0, 3} - d_{4, 3}
&=
2s^2 + 2 - 2(1-s)^2
= 4s.
\end{align*}
By construction, the $0$th excess matrix $E^{(0)} = 2 \mathrm{Gram}(v_1, \ldots, v_4)$ is positive semidefinite. Hence the semimetric is negative type.

In the present case, we have
\begin{align*}
\langle \1, \mathrm{adj}(Z_{tX_5}) \1 \rangle
&=
H t^6 + J t^7 + O(8), &
\det Z_{tX_5}
&=
H' t^6 + J' t^7 + O(8),
\end{align*}
where $H, J, H', J'$ are given by
\begin{align*}
H &= 128 s^2(s - 1)^2, &
J &=  -\frac{320}{3} s^2(s - 1)^2(2s^2 - 2s + 5), \\
H' &= 64 s^2(s - 1)^2, &
J' &= -\frac{64}{3} s^2(s - 1)^2(4s^2 - 4s + 13).
\end{align*}
It follows that the derivative $d\Mag(tX_5)/dt$ is
$$
\frac{d\Mag(tX_5)}{dt}
=
\frac{d}{dt}
\left(
\frac{H + Jt + O(2)}{H' + J't + O(2)}
\right)
=
\frac{JH' - HJ' + O(1)}{(H' + J't + O(2))^2},
$$
and its small-scale limit 
$$
\lim_{t \to 0}\frac{d\Mag(tX_5)}{dt}
=
\frac{JH' - HJ'}{(H')^2}
=
\frac{1 + 2s - 2s^2}{3}.
$$
This is negative when $s < (1 - \sqrt{3})/2$ or $(1 + \sqrt{3})/2 < s$. For example, if we take $s = -1$ or $s = 2$, then $\lim_{t \to 0} d\Mag(tX_5)/dt = -1 < 0$. Consequently, $\Mag(tX_5)$ is not monotonically increasing in $t$ for such an $s$.

\subsection{The case with more points}

\begin{thm} \label{thm:more_points}
For any $n \ge 6$, there exists an $n$-point semimetric space $X_n$ of negative type with the following properties:
\begin{itemize}
\item
$X_n$ is not a metric space;

\item
We have $\lim_{t \to 0}d\Mag(tX_n)/dt < 0$, and hence $\Mag(tX_n)$ is not monotonically increasing in $t > 0$.

\end{itemize}
\end{thm}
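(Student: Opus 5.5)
The plan is to build $X_n$ from the $4$- and $5$-point examples of \S\ref{subsec:4_point_example} and \S\ref{subsec:5_point_example} by a ``disjoint union at large distance'' construction. Write $n = 4a + 5b$ with $a, b \ge 0$, which is possible for every $n \ge 12$; for $6 \le n \le 11$ we would instead add far-away single points (or $2$-point spaces) to one bad block. So the basic construction is: take a negative type semimetric space $Y$ with $\lim_{t\to 0} d\Mag(tY)/dt < 0$ (one of the examples above), together with further negative type blocks $W_1, \ldots, W_m$ (copies of $Y$, or single points). Realize everything through the Schoenberg description of Proposition \ref{prop:Schoenberg_embedding}. Place the point configurations $\{v_i\}$ of the blocks in mutually orthogonal coordinate subspaces of a large $\R^N$, and translate block $k$ by a vector $R c_k$, where $R$ is a parameter and the $c_k$ are distinct. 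Then set $d_{i,j} = \lVert v_i - v_j\rVert^2$. By construction the excess matrix is twice a Gram matrix, so $X_n$ is of negative type. It is a semimetric because the points are distinct, and it is not a metric because it contains the non-metric triple of $Y$.

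Next we compute the small-scale derivative. Note that the failure near $t=0$ is not an asymptotic statement in $R$: for fixed $R$, the limit $t\to 0$ sees all distances. So the order of limits matters. I would therefore instead aim to make $d\Mag(tX_n)/dt$ negative at some small fixed $t_0>0$ rather than in the limit. First choose $t_0$ small with $d\Mag(t_0Y)/dt<0$, which is possible since the limit is negative. Then let $R\to\infty$. The zeta matrix $Z_{t_0X_n}$ tends to the block diagonal matrix $\mathrm{diag}(Z_{t_0Y}, Z_{t_0W_1},\ldots)$. The inter-block entries are $e^{-t_0 d}$ with $d \sim R^2$, so they decay together with their $d$-weighted versions. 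Hence, by Lemma \ref{lem:derivative_general}, the derivative converges to $\sum_k d\Mag(t_0 W_k)/dt$ plus the contribution of $Y$. By taking the extra blocks to be single points, whose derivative is $0$, the derivative at $t_0$ becomes negative for $R$ large. That gives non-monotonicity for every $n\ge 5$, and hence every $n \ge 6$.

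This, however, gives only non-monotonicity, not $\lim_{t\to0}d\Mag(tX_n)/dt<0$ as the theorem states. To get the limit statement, I would instead do a Roff--Yoshinaga style expansion as in the examples. Add points $v$ to the $5$-point planar configuration in a new orthogonal direction at distance parameter $\varepsilon$ (for instance $v_5 = (0,0,\varepsilon)$, and so on). Then show that $\lim_{t\to0}d\Mag(tX_n)/dt$ depends continuously on $\varepsilon$ and tends to the $5$-point value $-1$ as $\varepsilon\to0$. This continuity is the main obstacle, since the leading orders of the expansions of $\det Z$ and $\langle\1,\mathrm{adj}Z\1\rangle$ jump when the affine dimension of the configuration changes. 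What I would try first is to choose the additional points so that the configuration stays in a fixed generic position and the degrees are constant in $\varepsilon$, for example by scaling all new points by $\varepsilon$ along fixed generic directions. I would then verify, by the adjugate/determinant expansion, that the leading coefficients are polynomials in $\varepsilon$ whose ratio extends continuously to $\varepsilon=0$ with the value computed for $X_5$ (or $X_4$).
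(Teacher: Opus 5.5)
Your far-apart-blocks construction is correct as far as it goes. As you note, though, it only gives some $t_0$ with $d\Mag(t_0X_n)/dt<0$, not the stated $\lim_{t\to0}d\Mag(tX_n)/dt<0$. For the limit you rely on one claim: as the added points collapse, the small-scale derivative of the enlarged configuration tends to that of $X_5$ (or $X_4$). This is neither proved nor true in general. At $\varepsilon=0$ two points coincide, so $\det Z_{tX}$ and $\langle\1,\mathrm{adj}(Z_{tX})\1\rangle$ vanish identically. The ratio of leading coefficients is therefore a genuine $0/0$, and its limit depends on the direction of approach. Three points already show this. Take $v_0=0$, $v_1=(1,0)$, $v_2=\varepsilon(\cos\theta,\sin\theta)$ with $\sin\theta\neq0$. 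This space is of strictly negative type, and the formula $\det D/\langle\1,\mathrm{adj}(D)\1\rangle$ from the Remark after this theorem gives
\[
\lim_{t\to0}\frac{d\Mag(tX)}{dt}=\frac{1-2\varepsilon\cos\theta+\varepsilon^2}{2\sin^2\theta}\ \longrightarrow\ \frac{1}{2\sin^2\theta}\quad(\varepsilon\to0),
\]
whereas the two-point space $\{v_0,v_1\}$ has value $1/2$. So the hoped-for continuity holds only for $\theta=\pi/2$, and ``fixed generic directions'' is exactly the wrong choice.

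The missing idea is the reason the orthogonal choice does work: the $1$-sum and its inclusion--exclusion formula. Suppose the new points lie in mutually orthogonal new directions attached at $v_0$, say $v_5=(0,0,\varepsilon)$, $v_6=(0,0,0,\varepsilon)$, \dots. Then $\lVert v_i-v_j\rVert^2=d_{i,0}+d_{0,j}$ whenever $v_i,v_j$ lie in different pieces. Hence your space is precisely the wedge $X_5\vee X_2\vee\cdots\vee X_2$ with $\ell=\varepsilon^2$. For a wedge $A\vee B$ at $p$, the identity $e^{-td(a,b)}=e^{-td(a,p)}e^{-td(p,b)}$ shows that $w_A+w_B-e_p$ is a weighting, where $e_p$ is the indicator of $p$. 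No triangle inequality is needed, so $\Mag(t(A\vee B))=\Mag(tA)+\Mag(tB)-1$ for every $t>0$. Negative type is preserved because the excess matrix at $p$ is block diagonal. With $k$ copies of $X_2$ wedged onto $X_4$, the small-scale derivative is therefore exactly $\lim_{t\to0}d\Mag(tX_4)/dt+k\ell/2$, which is negative once $\ell$ is small. This is the paper's proof, and it gives every $n\ge5$. Your concrete orthogonal configuration would succeed for the same reason, through exact inclusion--exclusion rather than continuity in $\varepsilon$.
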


\begin{proof}
We make use of a construction of a finite (semi)metric spaces, which is called the $1$-sum operation in \cite{DL}: Let $(A, d_A)$ and $(B, d_B)$ be finite semimetric spaces. We choose and fix base points $a_0 \in A$ and $b_0 \in B$. Let $A \vee B$ be the set given by identifying the base points, which is called the wedge sum in topology. By design, if the cardinalities of $A$ and $B$ are $\# A$ and $\# B$, respectively, then $\# (A \vee B) = \# A + \# B - 1$. We then define a semimetric $d$ on $A \vee B$ as follows.
\begin{itemize}
\item[(i)]
If $a, a' \in A \subset A \vee B$, then $d(a, a') = d_A(a, a')$. Similarly, if $b, b' \in B \subset A \vee B$, then $d(b, b') = d_B(b, b')$.

\item[(ii)]
If $a \in A \subset A \vee B$ and $b \in B \subset A \vee B$, then $d(a, b) = d_A(a, a_0) + d_B(b_0, b)$. Similarly, $d(b, a)$ is defined.

\end{itemize}
A property of this construction is that, if $A$ and $B$ are negative type, then so is $A \vee B$. One can directly verify this by showing that an excess matrix of $A \vee B$ is the direct sum of those of $A$ and $B$. Another property of this construction is that the condition for the inclusion-exclusion axiom in \cite{L} is satisfied by $A, B \subset A \vee B$. As a result, we have $\Mag(t(A \vee B)) = \Mag(tA) + \Mag(tB) - 1$ for any $t > 0$.

Now, let $X_4$ be a $4$-point semimetric space of negative type such that:
\begin{itemize}
\item
$X_4$ is not a metric space;

\item
We have $\lim_{t \to 0}\Mag(tX_4)/dt < 0$.

\end{itemize}
Such an example is constructed in \S\S\ref{subsec:4_point_example}. We also let a $2$-point set $X_2 = \{ 0, 1 \}$ be given a metric such that $d_{0, 1} = \ell > 0$. This is of negative type. We have
\begin{align*}
\Mag(tX_2) &= \frac{2}{1 + e^{- t \ell}}, &
\frac{d\Mag(tX_2)}{dt} &= \frac{2\ell e^{- t \ell}}{(1 + e^{- t \ell})^2}.
\end{align*}
For any positive integer $k$, we define $X_{4 + k}$ as the $1$-sum of $X_4$ and $k$ copies of $X_2$
$$
X_{4 + k} = X_4 \vee \underbrace{X_2 \vee \cdots \vee X_2}_{k}.
$$
This is a negative type metric space consisting of $4 + k$ points. From the formula $\Mag(tX_{4+k}) = \Mag(tX_4) + k \Mag(tX_2) - k$, it follows that 
$$
\lim_{t \to 0}\frac{\Mag(tX_{4 + k})}{dt} 
=
\lim_{t \to 0}\frac{\Mag(tX_4)}{dt} 
+ 
\frac{k\ell}{2}.
$$
Thus, if $\ell$ is small enough, then $\lim_{t \to 0} d\Mag(tX_{4 + k})/dt < 0$. Each $X_{4 + k}$ contains $X_4$ as a subspace. This implies that $X_{4 + k}$ is not a metric space.
\end{proof}

\begin{rem}
A finite (semi)metric space $X$ is said to be of \textit{strictly negative type} when its distance matrix $D$ is conditionally negative definite, or equivalently, its excess matrix $E_X$ is positive definite \cite{HLMT}. On the one hand, we can prove that $\lim_{t \to 0}d\Mag(tX)/dt = \det D/\langle \1, \mathrm{adj}(D) \1 \rangle> 0$ for any finite semimetric space $X$ of strictly negative type (cf.\ \cite{RW}). On the other hand, the examples of negative type semimetric spaces $X_n$ constructed in \S\S\ref{subsec:4_point_example}, \S\S\ref{subsec:5_point_example} and Theorem \ref{thm:more_points} have the property $\lim_{t \to 0}d\Mag(tX_n)/dt < 0$ and are not strictly negative type. Then one may ask whether there exists a semimetric space $X'_n$ with $n \ge 4$ such that:
\begin{itemize}
\item
$X'_n$ is not a metric space;

\item
There exists $t_0 > 0$ such that $d\Mag(tX'_n)/dt < 0$ at $t = t_0$, and hence $\Mag(tX'_n)$ is not monotonically increasing in $t > 0$;

\item
$X'_n$ is strictly negative type.

\end{itemize}
We can easily get such examples $X'_n$ by a perturbation of our examples $X_n$.
\end{rem}


\appendix

\section{The monotonicity of the determinant function}
\label{sec:appendix:determinant}

We prove a result which is mentioned in \S\ref{sec:introduction}.

\begin{prop}
Let $X$ be a finite semimetric space of negative type. Then the determinant function $t \mapsto \det Z_{tX}$ is monotonically increasing for all $t > 0$.
\end{prop}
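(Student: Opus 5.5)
We prove that $\frac{d}{dt}\log\det Z_{tX} \ge 0$ using Jacobi's formula together with the Schur product theorem, which is the technique of \cite{GM}. Since $X$ is of negative type, $Z_{tX}$ is positive definite for every $t>0$ by the lemma in \S\ref{sec:preliminary}. In particular $\det Z_{tX}>0$ and $Z_{tX}^{-1}$ is positive definite. Jacobi's formula, combined with $dZ_{tX}/dt = -D\circ Z_{tX}$ (as in the proof of Lemma \ref{lem:derivative_general}), gives
\[
\frac{d}{dt}\log\det Z_{tX} = \tr\bigl(Z_{tX}^{-1}\tfrac{dZ_{tX}}{dt}\bigr) = -\tr\bigl(Z_{tX}^{-1}(D\circ Z_{tX})\bigr).
\]
It therefore suffices to show that $\tr\bigl(Z_{tX}^{-1}(D\circ Z_{tX})\bigr)\le 0$.

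We then use the Schoenberg embedding, Proposition \ref{prop:Schoenberg_embedding}, to write $d_{i,j} = \lVert v_i - v_j\rVert^2 = \lVert v_i\rVert^2 + \lVert v_j\rVert^2 - 2\langle v_i,v_j\rangle$. Let $\Lambda=\mathrm{diag}(\lVert v_i\rVert^2)$ and let $G=(\langle v_i,v_j\rangle)$ be the Gram matrix, which is positive semidefinite. This gives the decomposition
\[
D\circ Z = \Lambda Z + Z\Lambda - 2\,G\circ Z, \qquad Z = Z_{tX}.
\]
We take the trace against $Z^{-1}$ term by term. The two diagonal pieces give $\tr(Z^{-1}\Lambda Z)+\tr(Z^{-1}Z\Lambda) = 2\tr\Lambda$. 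The remaining piece is $-2\tr(Z^{-1}(G\circ Z))$. The key observation is that $\tr(Z^{-1}(G\circ Z)) = \sum_{i,j}(Z^{-1})_{ij}\langle v_i,v_j\rangle Z_{ij}$, which equals $\sum_{i} \lVert v_i\rVert^2$ plus a correction. We evaluate it as $\sum_{i,j} G_{ij}(Z^{-1}\circ Z)_{ij} = \tr\bigl(G\,(Z^{-1}\circ Z)\bigr)$.

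The main obstacle is the inequality $\tr\bigl(G(Z^{-1}\circ Z)\bigr)\ge \tr\Lambda = \sum_i G_{ii}$, i.e.\ $\tr\bigl(G\,(Z^{-1}\circ Z - I)\bigr)\ge 0$. For this we invoke Fiedler's classical inequality: for positive definite $Z$, the matrix $Z^{-1}\circ Z - I$ is positive semidefinite. Since $G$ is positive semidefinite, the trace of the product of two positive semidefinite matrices is nonnegative, and the inequality follows. Putting the pieces together,
\[
\tr\bigl(Z^{-1}(D\circ Z)\bigr) = 2\tr\Lambda - 2\tr\bigl(G(Z^{-1}\circ Z)\bigr) = -2\tr\bigl(G(Z^{-1}\circ Z - I)\bigr)\le 0,
\]
so $\frac{d}{dt}\det Z_{tX}\ge 0$. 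If we wish to avoid citing Fiedler, we prove the needed statement directly. It amounts to showing that $\sum_{i,j} x_i x_j (Z^{-1})_{ij}Z_{ij}\ge \sum_i x_i^2$, which follows by writing $Z = Z^{1/2}Z^{1/2}$ and using the $2\times 2$ block matrix $\begin{pmatrix} Z & I\\ I & Z^{-1}\end{pmatrix}\succeq 0$. Taking the Hadamard product of this block matrix with $\begin{pmatrix} Z^{-1} & I\\ I & Z\end{pmatrix}$ yields $\begin{pmatrix} Z\circ Z^{-1} & I\\ I& Z^{-1}\circ Z\end{pmatrix}\succeq 0$, and hence $(Z\circ Z^{-1})^2\succeq I$. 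Since $Z\circ Z^{-1}$ is positive semidefinite, this gives $Z\circ Z^{-1}\succeq I$.
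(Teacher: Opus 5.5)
Your proof is correct and follows essentially the paper's route: Jacobi's formula, then Fiedler's inequality $Z_{tX}\circ Z_{tX}^{-1}\succeq I$ paired against a positive semidefinite matrix that encodes the negative type condition, with $\tr(AB)\ge 0$ for positive semidefinite $A,B$ finishing the argument. The differences are cosmetic. You use the Schoenberg Gram matrix $G$ and trace cyclicity, whereas the paper uses $-PDP$ (which equals $2PGP$) and the identity $(Z_{tX}\circ Z_{tX}^{-1})\1=\1$; and you prove Fiedler's inequality via the Schur product theorem rather than citing it.
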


\begin{proof}
By assumption, $Z_{tX}$ is positive definite and $\det Z_{tX} > 0$. We have
\begin{align*}
\frac{1}{\det Z_{tX}}
\frac{d \det Z_{tX}}{dt}
&=
\tr \left(
\frac{dZ_{tX}}{dt}
Z_{tX}^{-1}
\right)
=
\tr
\left(
(-D \circ Z_{tX})Z_{tX}^{-1}
\right)
\\
&=
\tr
\left(
-D(Z_{tX} \circ Z_{tX}^{-1})
\right)
=
\mathrm{tr}
\left(
-D(Z_{tX} \circ Z_{tX}^{-1} - I)
\right),
\end{align*}
where $\circ$ denotes the Hadamard (entrywise) product, and $I \in M(n, \R)$ the identity matrix. Once the last term above is shown to be non-negative, the proof will be completed. Let $P = I - \frac{1}{n}\1 {}^t\1$ be the orthogonal projection onto the orthogonal complement $\Pi_0 = \1^\perp$ of the vector $\1 \in \R^n$ whose entries are $1$. A computation shows that $(Z_{tX} \circ Z_{tX}^{-1})\1 = \1$. Using this formula and $I \1 = \1$, we can see
$$
P (Z_{tX} \circ Z_{tX}^{-1} - I) P = Z_{tX} \circ Z_{tX}^{-1} - I.
$$
Hence we get
$$
\tr
\left(
-D(Z_{tX} \circ Z_{tX}^{-1} - I)
\right)
=
\tr
\left(
(-PDP)(Z_{tX} \circ Z_{tX}^{-1} - I)
\right).
$$
This is non-negative, because of the following reason: By definition $(X, d)$ is of negative type if and only if $\langle v, Dv \rangle \le 0$ for all $v \in \Pi_0$. Since $P$ is the orthogonal projection onto $\Pi_0$, it is clear that the negative type assumption is equivalent to that $-PDP$ is positive semidefinite.  By Fiedler's theorem (see \cite{BK} for example), if $A$ is positive definite, then $A \circ A^{-1} - I$ is positive semidefinite. Applying this to $A = Z_{tX}$, we find that $Z_{tX} \circ Z_{tX}^{-1} - I$ is positive semidefinite. Finally, if $A$ and $B$ are positive semidefinite matrices of the same size, then $\tr(AB) \ge 0$ by elementary linear algebra. 
\end{proof}


\end{document}